\title[ ]{Some refined results  on  mixed Littlewood conjecture for pseudo-absolute values}
\author{Wencai Liu}
\address[Wencai Liu]{Department of Mathematics, University of California, Irvine, California 92697-3875, USA} \email{liuwencai1226@gmail.com}
\newcommand{\R}{\mathbb{R}}
\newcommand{\Z}{\mathbb{Z}}
\theoremstyle{plain}
\newtheorem{theorem}{Theorem}[section]
\newtheorem{corollary}[theorem]{Corollary}
\newtheorem{lemma}[theorem]{Lemma}
\theoremstyle{definition}
\newtheorem{remark}[theorem]{Remark}
\begin{document}

 \newcommand{\N}{\mathbb{N}}

\begin{abstract}
In this paper,
we study  the  mixed Littlewood conjecture with pseudo-absolute values.
For any pseudo absolute value sequence $\mathcal{D}$, we obtain the sharp criterion  such that
for almost every $\alpha$ the inequality
\begin{equation*}
    |n|_{\mathcal{D} }|n\alpha -p|\leq \psi(n)
\end{equation*}
has infinitely many coprime solutions  $(n,p)\in\N\times \Z$ for a certain one-parameter family of $\psi$. Also
under minor condition on pseudo absolute value sequences
 $\mathcal{D}_1$,$\mathcal{D}_2,\cdots, \mathcal{D}_k$,  we  obtain a sharp criterion on general
 sequence $\psi(n)$ such that
for almost every $\alpha$ the inequality
\begin{equation*}
    |n|_{\mathcal{D}_1}|n|_{\mathcal{D}_2}\cdots |n|_{\mathcal{D}_k}|n\alpha-p|\leq \psi(n)
\end{equation*}
has infinitely many coprime solutions  $(n,p)\in\N\times \Z$.


\end{abstract}
\maketitle

 \section{Introduction}

The  {\it Littlewood Conjecture }  states
that for every pair $(\alpha,\beta)$ of real numbers, we have that
\begin{equation} \label{littlewood}
\liminf_{n\to\infty}n\|n\alpha\|\|n\beta\|=0,
\end{equation}
where $||x||=\text{dist}(x,\Z)$.
   We refer the readers   to   \cite{bugeaud2014around,bugeaud2011badly}  for recent progress. By a fundamental    result of Einsiedler-Katok-Lindenstrauss  \cite{einsiedler2006invariant}  the set of pairs $(\alpha,\beta)$
for which (\ref{littlewood})  does not hold  is a zero Hausdorff dimension set.

From the
 metrical point, \eqref{littlewood} can be strengthened.
  Gallagher \cite{Gal} established   that if $\psi:\N\to\R$ is a non-negative decreasing function, then for almost every  $(\alpha,\beta)$ the inequality
\begin{equation*}\label{metricLittlewood1}
\|n\alpha\|\|n\beta\|\le \psi (n)
\end{equation*}
has infinitely    many  solutions for $n \in \N $   if and only if
$\sum_{n\in\N}\psi (n)\log n =\infty$. In
particular, it follows that
\begin{equation*}
\liminf_{n\to\infty}n \, (\log n)^2 \|n\alpha\|\|n\beta\|=0
\end{equation*}
for almost every  pair $(\alpha,\beta)$ of real numbers.
By a method of \cite{Sch}, Bugeaud and Moshchevitin\cite{bugeaud2011badly} showed  that there exist  pairs of $(\alpha,\beta)$ such that
\begin{equation*} 
\liminf_{n\to\infty}n \, (\log n)^2 \|n\alpha\|\|n\beta\|>0.
\end{equation*}
This result has been improved by Badziahin\cite{Bad}, which states that
the set of pairs $(\alpha,\beta)$ satisfying
\begin{equation*}
    \liminf_{n\to \infty} n\log n\log\log n\|n\alpha\|\|n\beta\|>0
\end{equation*}
has full Hausdorff dimension in $\R^2$.
It is conjectured that Littlewood conjecture can be strengthened to
 \begin{equation*} 
\liminf_{n\to\infty}n\log n\|n\alpha\|\|n\beta\|=0,
\end{equation*}
for all $(\alpha,\beta)\in\R^2$.

In \cite{de2004problemes},   de Mathan and Teuli\'{e} formulated  another conjecture
 --  known as the {\it Mixed Littlewood
Conjecture}.
 Let $\mathcal{D}=\{n_k\}_{k\ge 0}$ be an increasing sequence of positive integers with $n_0=1$ and $n_k|n_{k+1}$ for all $k$. We refer to such a sequence as a {\em pseudo-absolute value sequence}, and we define the   $\mathcal{D}$-adic pseudo-norm $|\cdot|_{\mathcal{D}}:\N\to \{n_k^{-1}:k\ge 0\}$ by
$$
	|{n}|_\mathcal{D} = \min\{ n_k^{-1} : n\in  n_k\Z \}.
$$
In the case  $\mathcal{D}=\{p^k\}_{k=0}^\infty$ for some integer $p\ge 2$,  we also write $|\cdot|_{\mathcal{D}}=|\cdot|_p$. 
B. de Mathan  and  O. Teuli\'{e} \cite{de2004problemes}
conjectured that for any  real number $\alpha$ and any pseudo-absolute value sequence $\mathcal{D}$, we have that
\begin{equation*}
\liminf_{n\to\infty}n|n|_\mathcal{D}\|n\alpha\|=0.
\end{equation*}
In particular, the statement that $\liminf_{n\to\infty}n|n|_p\|n\alpha\|=0$   for   every real number $\alpha$ and prime number $p$, is referred as
$p$-adic Littlewood conjecture.

 Einsiedler and  Kleinbock  have shown that any
exceptional set to the  de Mathan-Teuli\'{e} Conjecture has to be of
zero Hausdorff dimension \cite{einsiedler2007measure}.   
 By  a theorem of Furstenberg \cite{Furstenberg}, one has  that
for any two prime numbers $p,q$  and every real number $\alpha$
\begin{equation}\label{fur}
\liminf_{n\to\infty}  n |n|_{p}  |n|_{q}\|n\alpha\|  =0.
\end{equation}
This result can be made quantitative \cite{bourgain2009some}, that is
\begin{equation*}
   \liminf_{n\to\infty}  n(\log \log \log n )^{\kappa}|n|_{p}  |n|_{q}\|n\alpha\|  =0
\end{equation*}
for some $\kappa>0$.
The statement \eqref{fur} can be strengthened from a metrical point of view \cite{bugeaud2011metric}, that is,
 suppose $p_1,\ldots , p_k$ are   distinct prime numbers and
$\psi:\N\to\R$ is a non-negative decreasing function, then for
almost every real number $\alpha$ the inequality
\begin{equation*}
|n|_{p_1}\cdots |n|_{p_k}|n\alpha-p|\le \psi (n)
\end{equation*}
has infinitely   many coprime solutions $(n,p)\in \N \times\Z$  if and only if
\begin{equation}\label{impremark}
\sum_{n\in\N}(\log n)^k\psi (n)=\infty.
\end{equation}
As a corollary, it is true that
\begin{equation} \label{fur1}
\liminf_{n\to\infty}  n \, (\log n)^{k+1} |n|_{p_1}\cdots
|n|_{p_k}\|n\alpha\|  =0  \
\end{equation}
for almost every    $\alpha\in\R$. 

In \cite{harrap2013mixed},  Harrap and Haynes consider the  {\em $\mathcal{D}$-adic pseudo-absolute value}.
Given a pseudo-absolute value sequence $\mathcal{D}$  with some minor restriction, let $\mathcal{M}:\N\to\N\cup\{0\}$ be
\[\mathcal{M}(N)=\max\left\{k: n_k \leq N  \right\}.\]

Suppose that $\psi:\mathbb{N} \rightarrow \mathbb{R}$ is non-negative and decreasing and that $\mathcal{D}=\{n_k\}$ is a pseudo-absolute value sequence satisfying
	\begin{equation}\label{Euler1}
		\sum_{k=1}^{m} \frac{\varphi(n_k)}{n_k}\geq c m~\text{for all}~m\in\N ~\text{and for some}~c>0,
	\end{equation}
where $\varphi$ is the Euler phi function. Then for almost every $\alpha\in\R$ the inequality
	\begin{equation*}
		 |{n}|_\mathcal{D} |{n\alpha}-p| \le \psi(n)
	\end{equation*}
	has infinitely   many coprime solutions $(n,p) \in \mathbb{N}\times \Z$ if and only if
\begin{equation}\label{equ12}
\sum_{n=1}^{\infty} \mathcal{M}(n)\psi(n)=\infty.
\end{equation}

Note that
when $ {\mathcal{D}}=\{p^k\}$ for some positive integer $p$ we have that $\mathcal{M}(N)\asymp  \log N$. Thus  Harrap-Haynes' result implies \eqref{impremark} for $k=1$.
The first goal of this paper is to  extend \eqref{impremark} to the class of finitely many pseudo-absolute value sequences.

As pointed out in \cite{harrap2013mixed},  such generalization depends on the overlap among  pseudo-absolute value sequences.
For example\footnote{The present example and the following one are from \cite{harrap2013mixed}.} if $\mathcal{D}_1=\{2^k\}$ and $\mathcal{D}_2=\{3^k\}$, \eqref{fur1} yields that  inequality
\begin{equation*}
 |n|_{\mathcal{D}_1}|n|_{\mathcal{D}_2}\|n\alpha\|\le\psi (n)
\end{equation*}
has infinitely many solutions for almost every $\alpha$ if and only if
\[\sum_{n\in\N}(\log n)^2\psi (n)=\infty.\]
However if $\mathcal{D}_1=\mathcal{D}_2=\{2^k\}$, by \cite[Theorem 2]{bugeaud2011metric}, the inequality has infinitely many solutions for almost every $\alpha$ if and only if
\[\sum_{n\in\N} n\psi (n)=\infty.\]

Basically, the proof of \eqref{impremark} and \eqref{equ12} follows from Duffin-Schaeffer Theorem \cite{duffin1941khintchine}(see Theorem \ref{duffin}), which is a weaker version of    Duffin-Schaeffer conjecture.

 {\bf Duffin-Schaeffer Conjecture}:
 Let $\psi:\N\to \R$ be   non-negative  function and define
 \begin{equation*}
{\mathcal E}_n={\mathcal E}_n(\psi)=\bigcup_{p=1 \atop
(p,n)=1}^n\left(\frac{p-\psi(n)}{n},\frac{p+\psi(n)}{n}\right),
\end{equation*}
where $(p,n)$ is the largest common divisor between $p$ and $n$.
Then
$\lambda(\limsup{\mathcal E}_n)=1$ if
and only if $\sum_n\lambda({\mathcal E}_n)=\infty$, where $\lambda$
denotes the Lebesgue measure on $\mathbb{R}/\mathbb{Z}$.

One side of Duffin-Schaeffer conjecture is trivial. If $\sum_n\lambda({\mathcal E}_n)<\infty$, by Borel-Cantelli Lemma,
$\lambda(\limsup{\mathcal E}_n)=0$. Since it has been posted,  Duffin-Schaeffer conjecture was heavily investigated   in \cite{PV,Stra,beresnevich2013duffin,haynes2012duffin,Li1,Li2}. We should mention that  Duffin-Schaeffer conjecture is equivalent to the following statement:
Suppose $\psi:\N\to \R$ is a non-negative  function and satisfies
\begin{equation*}
    \sum_n\frac{\varphi(n)\psi(n)}{n}=\infty,
\end{equation*}
where $\varphi$ is the Euler phi function.
 Then for almost every $\alpha\in\R$ the inequality
	\begin{equation*}
		|{n\alpha}-p| \le \psi(n)
	\end{equation*}
	has infinitely   many coprime solutions $(n, p)\in \mathbb{N}\times\Z$.

We will also employ  Duffin-Schaeffer Theorem to  study mixed Littlewood conjecture in the present paper and  find a nice divergence condition for finite pseudo-absolute values.

\begin{theorem}\label{thm2}
Let $\psi:\mathbb{N} \rightarrow \mathbb{R}$  be non-negative and decreasing and let $\mathcal{D}_1=\{n_k^{1}\},\mathcal{D}_2=\{n_k^{2}\}, \cdots , \mathcal{D}_m=\{n_k^{m}\}$   be  $m$ pseudo-absolute value sequences. Suppose   $ \mathcal{D}_1, \mathcal{D}_2,\cdots,\mathcal{D}_m$  satisfies the following
condition: there exists some constant $c_1>0$ such that 
\begin{equation}\label{equ13}
		   \frac{\varphi(n_{k_1}^1n_{k_2}^2\cdots n_{k_m}^m)}{n_{k_1}^1n_{k_2}^2\cdots n_{k_m}^m}\ge c_1,
	\end{equation}
where $\varphi$ is the Euler phi function. Then for almost every $\alpha\in\R$, the inequality
	\begin{equation*}
		|{n}|_{\mathcal{D}_1 }|{n}|_{\mathcal{D}_2}\cdots |{n}|_{\mathcal{D}_m }|{n\alpha}-p| \le \psi(n)
	\end{equation*}
	has infinitely   many coprime solutions $(n, p)\in \mathbb{N}\times\Z$ if  and only if \begin{equation}\label{equ14}
\sum_{n=1}^{\infty} \frac{\psi(n)}{|{n}|_{\mathcal{D}_1 }|{n}|_{\mathcal{D}_2}\cdots |{n}|_{\mathcal{D}_m }}=\infty.
\end{equation}
\end{theorem}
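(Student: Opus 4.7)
My plan is to reduce the theorem to the classical Duffin--Schaeffer Theorem (Theorem~\ref{duffin}) applied to the weighted approximation function
\[
\Psi(n) := \frac{\psi(n)}{|n|_{\mathcal{D}_1}|n|_{\mathcal{D}_2}\cdots |n|_{\mathcal{D}_m}}.
\]
The inequality in the statement is then exactly $|n\alpha-p|\le\Psi(n)$, and hypothesis \eqref{equ14} reads $\sum_n\Psi(n)=\infty$. The convergence half is immediate Borel--Cantelli: each set $E_n$ has Lebesgue measure $\le 2\varphi(n)\Psi(n)/n\le 2\Psi(n)$, so $\sum_n\Psi(n)<\infty$ forces $\lambda(\limsup E_n)=0$.

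For the divergence direction Theorem~\ref{duffin} requires $\sum_n\varphi(n)\Psi(n)/n=\infty$ together with the auxiliary ratio condition; both are forced by a single uniform estimate
\[
\sum_{n\le N}\frac{\varphi(n)\Psi(n)}{n}\gtrsim \sum_{n\le N}\Psi(n),
\]
with implicit constant depending only on $c_1$ from \eqref{equ13}. To prove it I partition $\N=\bigsqcup_{\mathbf{k}}A(\mathbf{k})$ with $\mathbf{k}=(k_1,\ldots,k_m)$ and $A(\mathbf{k}):=\{n:|n|_{\mathcal{D}_j}=1/n_{k_j}^j\text{ for every }j\}$. On each block $\Psi(n)=N(\mathbf{k})\psi(n)$ with $N(\mathbf{k}):=n_{k_1}^1\cdots n_{k_m}^m$, and every $n\in A(\mathbf{k})$ is divisible by $L(\mathbf{k}):=\operatorname{lcm}(n_{k_1}^1,\ldots,n_{k_m}^m)$. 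Since $L(\mathbf{k})$ and $N(\mathbf{k})$ share the same radical, \eqref{equ13} yields $\varphi(L(\mathbf{k}))/L(\mathbf{k})=\varphi(N(\mathbf{k}))/N(\mathbf{k})\ge c_1$. Writing $n=L(\mathbf{k})r$ and splitting $r=r_1r_2$ with $r_2$ coprime to $L(\mathbf{k})$, a multiplicative calculation gives $\varphi(n)/n\ge c_1\varphi(r_2)/r_2$. Combined with the classical average $\sum_{r\le R}\varphi(r)/r\asymp R$ and Abel summation against the decreasing $\psi$, this yields $\sum_{n\in A(\mathbf{k})}\varphi(n)\psi(n)/n\gtrsim c_1\sum_{n\in A(\mathbf{k})}\psi(n)$. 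Multiplying by $N(\mathbf{k})$ and summing over $\mathbf{k}$ produces the required estimate, and Theorem~\ref{duffin} then yields infinitely many coprime solutions for almost every $\alpha$.

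The main technical obstacle will be the sieve constraints that excise $A(\mathbf{k})$ from $L(\mathbf{k})\N$, namely $n_{k_j+1}^j\nmid n$ for $j=1,\ldots,m$. Although these are periodic restrictions, verifying that they remove only a bounded proportion of terms when averaging $\varphi(n)/n$ requires an inclusion--exclusion argument combined with a second application of \eqref{equ13} in which each $k_j$ is replaced by $k_j+1$. A minor additional point is the standard truncation of $\Psi(n)$ at $1/2$, which affects neither the Borel--Cantelli calculation nor the Duffin--Schaeffer sum asymptotically.
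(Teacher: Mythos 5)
Your proposal follows essentially the same path as the paper: partition $\N$ by the vector $\mathbf{k}$ of $\mathcal{D}_j$-adic valuations, note $\varphi(L(\mathbf{k}))/L(\mathbf{k})=\varphi(N(\mathbf{k}))/N(\mathbf{k})\ge c_1$ via the shared radical, control $\varphi(n)/n$ on each block with multiplicativity, average $\varphi(r)/r$ over the sieved residues, and invoke the Duffin--Schaeffer theorem (Theorem~\ref{duffin}) after Abel summation against the decreasing $\psi$. The one place you diverge is the treatment of the sieve constraints. The paper first applies $\varphi(Lr)\ge\varphi(L)\varphi(r)$ and then uses Lemma~\ref{keysection2}, which is a purely analytic statement that removing up to $m$ divisibility classes from $\sum_{r\le R}\varphi(r)/r$ still leaves $\gtrsim_m R$; this sidesteps any inclusion--exclusion and does not require a ``second application of \eqref{equ13} with $k_j\to k_j+1$'' as you propose. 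Your route can be made to work, but note one subtlety both approaches must respect: after writing $n=L(\mathbf{k})r$, the condition $n_{k_j+1}^j\nmid n$ is equivalent to $e_j\nmid r$ with $e_j=d_{k_j+1}^j/\gcd(d_{k_j+1}^j,\,L(\mathbf{k})/n_{k_j}^j)$, not to $d_{k_j+1}^j\nmid r$; when $e_j=1$ the block is simply empty and contributes nothing to either side, and when all $e_j\ge 2$ the lemma applies verbatim. With that bookkeeping your plan is sound and equivalent in substance to the paper's proof.
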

\begin{remark}
Let $p_1,\ldots , p_m$ be  distinct prime numbers, and $\mathcal{D}_i=\{p^k_i\}$, $i=1,2,\cdots,m$.
For such pseudo-absolute value sequences $\mathcal{D}_i$, $i=1,2,\cdots, m$, one has \eqref{equ13} holds.
By the fact that (see \cite{bugeaud2011metric})
$$
\sum_{n\in\N}(\log n)^m\psi (n) = \infty   \quad
\Longleftrightarrow \quad \sum_{n\in\N}\frac{\psi (n)}{ |n|_{p_1}
\cdots  |n|_{p_m}} = \infty \, ,
$$
 Theorem \ref{thm2} implies \eqref{impremark}.
\end{remark}
We say a pseudo-absolute value sequence $\mathcal{D}=\{n_k\}$ is generated by finite integers if there exist prime numbers  $p_1,p_2,\cdots, p_N $ such that
every $n_k$  can be written as  $p_1^{k_1}p_2^{k_2}\cdots p_N^{k_N}$ for some proper positive integers $k_1,k_2,\cdots,k_N$. We call $p_1,p_2,\cdots,p_N$ the generators of $\mathcal{D}$.
\begin{corollary}\label{cor}
Let $\psi:\mathbb{N} \rightarrow \mathbb{R}$  be non-negative and decreasing and let $\mathcal{D}_1=\{n_{k}^1\},\mathcal{D}_2=\{n_{k}^2\}, \cdots , \mathcal{D}_m=\{n_{k}^m\}$  be  $m$ pseudo-absolute value sequences. Suppose  each $\mathcal{D}_1 ,\mathcal{D}_2 , \cdots ,\mathcal{D}_m$
is generated by finite integers. Then for almost every $\alpha\in\R$ the inequality
	\begin{equation*}
		  |{n}|_{\mathcal{D}_1 }|{n}|_{\mathcal{D}_2}\cdots |{n}|_{\mathcal{D}_m }|{n\alpha}-p| \le \psi(n)
	\end{equation*}
	has infinitely   many coprime solutions $(n,p) \in \mathbb{N}\times\Z$ if  and only if
\begin{equation*}
\sum_{n=1}^{\infty} \frac{\psi(n)}{|{n}|_{\mathcal{D}_1 }|{n}|_{\mathcal{D}_2}\cdots |{n}|_{\mathcal{D}_m }}=\infty.
\end{equation*}
\end{corollary}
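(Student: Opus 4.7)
The plan is to deduce Corollary \ref{cor} directly from Theorem \ref{thm2} by checking that the hypothesis \eqref{equ13} holds automatically once each $\mathcal{D}_i$ is generated by finitely many primes. Since condition \eqref{equ13} is the only extra assumption in Theorem \ref{thm2} beyond the set-up of the corollary, once this verification is done, both the sufficiency and the necessity of the divergence condition are inherited from the theorem without any further work.

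To carry out the verification, I would let $P_i=\{p_1^{(i)},p_2^{(i)},\ldots,p_{N_i}^{(i)}\}$ be the set of prime generators of $\mathcal{D}_i$ and set $P=\bigcup_{i=1}^m P_i$, which is still a \emph{finite} set of primes. By the definition of ``generated by finite integers'', every element of $\mathcal{D}_i$ has all its prime divisors in $P_i\subset P$, and therefore every product $n_{k_1}^1 n_{k_2}^2\cdots n_{k_m}^m$ has prime factors lying in $P$. Using the Euler product for $\varphi$, namely $\varphi(n)/n=\prod_{p\mid n}(1-1/p)$, I obtain for every such product
\begin{equation*}
\frac{\varphi(n_{k_1}^1 n_{k_2}^2\cdots n_{k_m}^m)}{n_{k_1}^1 n_{k_2}^2\cdots n_{k_m}^m}\;\ge\;\prod_{p\in P}\Bigl(1-\frac{1}{p}\Bigr).
\end{equation*}
Taking $c_1:=\prod_{p\in P}(1-1/p)$, the product is a finite product of strictly positive numbers and hence $c_1>0$. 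This is precisely the hypothesis \eqref{equ13}, so Theorem \ref{thm2} applies to $\mathcal{D}_1,\ldots,\mathcal{D}_m$ and the conclusion of Corollary \ref{cor} follows.

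I do not foresee any real obstacle: the content is essentially the observation that a finite union of finite sets of primes remains finite, so the Euler product supplies a uniform positive lower bound. The only thing one must be slightly careful about is that the bound $c_1$ depends on $P$ and not on the specific exponents $k_1,\ldots,k_m$, which is exactly what \eqref{equ13} requires.
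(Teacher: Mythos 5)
Your proposal is correct and follows exactly the paper's route: the paper also deduces the corollary directly from Theorem \ref{thm2} by observing that \eqref{equ13} holds when each $\mathcal{D}_i$ is generated by finitely many primes. The paper states this as an unelaborated observation, while you supply the (correct) Euler-product justification $\varphi(n)/n=\prod_{p\mid n}(1-1/p)\ge\prod_{p\in P}(1-1/p)>0$ that makes it explicit.
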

\begin{proof}
If  $\mathcal{D}_j$
is generated by finite integers for each $j=1,2,\cdots,m$, one has
\eqref{equ13} holds. Thus Corollary \ref{cor} directly follows  from Theorem \ref{thm2}.
\end{proof}
Suppose there is no intersection between the pseudo-absolute value sequences. Then we can get better results.
We say two pseudo-absolute value sequences $\mathcal{D}_1=\{n_{k}^1\}$  and  $\mathcal{D}_2=\{n_{k}^2\}$ are coprime  if
$n_{i}^1$ and $n_{j}^2$ are coprime for any $i,j\in \N$.

\begin{theorem}\label{thmnew}
Let $\psi:\mathbb{N} \rightarrow \mathbb{R}$  be non-negative and decreasing.
Suppose the pseudo-absolute value sequences $\mathcal{D}_1=\{n_{k}^1\},\mathcal{D}_2=\{n_{k}^2\}, \cdots , \mathcal{D}_m=\{n_{k}^m\}$ are mutually coprime
and
\begin{equation}\label{equ13new}
		\sum_{n_{k_1}^1n_{k_2}^2\cdots n_{k_m}^m\leq N}  \frac{\varphi(n_{k_1}^1n_{k_2}^2\cdots n_{k_m}^m)}{n_{k_1}^1n_{k_2}^2\cdots n_{k_m}^m}\ge c_2\#\{(k_1,k_2,\cdots,k_m):n_{k_1}^1n_{k_2}^2\cdots n_{k_m}^m\leq N\},
	\end{equation}
for some constant $c_2>0$.
Suppose that there exists some $c_3$ with  $0<c_3<1$ such that   
\begin{equation}\label{1829equ1}
  \sum_{ n_{k_1}^1n_{k_2}^2\cdots n_{k_m}^m\leq N} n_{k_1}^1n_{k_2}^2\cdots n_{k_m}^m\leq c_3 N \#\{(k_1,k_2,\cdots,k_m):n_{k_1}^1n_{k_2}^2\cdots n_{k_m}^m\leq N\},
\end{equation}
for all large $N$.

 Then for almost every $\alpha\in\R$,  the inequality
	\begin{equation*}
		  |{n}|_{\mathcal{D}_1 }|{n}|_{\mathcal{D}_2}\cdots |{n}|_{\mathcal{D}_m }|{n\alpha}-p| \le \psi(n)
	\end{equation*}
	has infinitely   many coprime solutions $(n,p) \in \mathbb{N}\times \Z$ if  and only if
\begin{equation*}
\sum_{n=1}^{\infty} \psi(n) \#\{(k_1,k_2,\cdots,k_m):n_{k_1}^1n_{k_2}^2\cdots n_{k_m}^m\leq n\}=\infty.
\end{equation*}
\end{theorem}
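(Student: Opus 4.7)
The plan is to recast the problem so that the Duffin--Schaeffer theorem (Theorem \ref{duffin}) applies. For each $n$ set $k_j(n)=\max\{k:n_k^j\mid n\}$ and $D(n)=n_{k_1(n)}^1\cdots n_{k_m(n)}^m$; mutual coprimality of the $\mathcal{D}_j$ ensures that $D(n)\mid n$ and $|n|_{\mathcal{D}_1}\cdots|n|_{\mathcal{D}_m}=1/D(n)$. Putting $\tilde{\psi}(n):=\psi(n)D(n)$, the coprime inequality in the theorem is exactly $|\alpha-p/n|\le \tilde{\psi}(n)/n$ with $(n,p)=1$, so the relevant set is $\limsup_n \mathcal{E}_n$ for the usual Duffin--Schaeffer sets $\mathcal{E}_n$ built from $\tilde{\psi}$, of total measure $\lambda(\mathcal{E}_n)=2\varphi(n)\tilde{\psi}(n)/n$.

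For the convergence direction I would use the trivial Borel--Cantelli half: $\sum_n\varphi(n)\tilde{\psi}(n)/n\le\sum_n\psi(n)D(n)<\infty$ suffices. An interchange of summation gives
\[
\sum_{n\ge 1}\psi(n)\,\#\{(k_1,\ldots,k_m):n_{k_1}^1\cdots n_{k_m}^m\le n\}=\sum_{(k_1,\ldots,k_m)}\sum_{n\ge n_{k_1}^1\cdots n_{k_m}^m}\psi(n),
\]
and an Abel-type argument, using monotonicity of $\psi$ together with hypothesis \eqref{1829equ1} (the strict bound $c_3<1$), identifies this tail expression up to multiplicative constants with $\sum_n\psi(n)D(n)$. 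For the divergence direction I would apply Theorem \ref{duffin} to $\tilde{\psi}$: hypothesis \eqref{equ13new} says that after grouping by the tuple $(k_1,\ldots,k_m)$, the factor $\varphi(D)/D$ averages to a positive constant, which together with the equivalence just described upgrades the assumed divergence of the sum in the theorem to divergence of $\sum_n\varphi(n)\tilde{\psi}(n)/n$. The auxiliary hypothesis of the weaker Duffin--Schaeffer theorem (comparability of $\varphi$-weighted and unweighted partial sums) also falls out of \eqref{equ13new} and \eqref{1829equ1}, since the latter prevents the $\tilde{\psi}$-mass from concentrating at the top of each dyadic window.

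The main obstacle is the precise bookkeeping behind the two-sided comparison
$\sum_n\psi(n)D(n)\asymp\sum_n\psi(n)\#\{(k_1,\ldots,k_m):n_{k_1}^1\cdots n_{k_m}^m\le n\}$, and the parallel comparison with $\varphi$-weights inserted. For each tuple the set $\{n:D(n)=D\}$ is, by coprimality and inclusion--exclusion, a positive-density subset of $D\N$, and one must pair the resulting inner sums with the tails $\sum_{n\ge D}\psi(n)$ so as to reproduce cleanly the weight $\varphi(D)$. This requires simultaneously that the average of $D$ over $D\le N$ stay strictly below $N$ (hypothesis \eqref{1829equ1}) and that $\varphi(D)/D$ stay away from $0$ on average (hypothesis \eqref{equ13new}). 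Fitting these two estimates together to verify exactly the hypotheses of Theorem \ref{duffin} is the heart of the proof; once this is achieved, the theorem is an immediate consequence.
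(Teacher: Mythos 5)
Your proposal takes essentially the same route as the paper: reduce to $\tilde\psi(n)=\psi(n)D(n)$, settle the convergence direction by Borel--Cantelli, and settle the divergence direction by verifying the hypotheses of the Duffin--Schaeffer Theorem~\ref{duffin}, with \eqref{equ13new} supplying the average lower bound on $\varphi(D)/D$ and \eqref{1829equ1} controlling the unweighted comparison. The bookkeeping you flag as the main obstacle is carried out in the paper by Abel summation against the monotone $\psi$: it first shows $\sum_{j\le n}D(j)\asymp n\mathcal{M}(n)$ (the upper bound is trivial, the lower bound uses \eqref{equ13new}) and, in Lemma~\ref{Le18}, that \eqref{1829equ1} forces $N\mathcal{M}(N)\asymp\sum_{n\le N}\mathcal{M}(n)$, after which partial summation delivers exactly the two-sided comparisons $\sum_{n\le N}\psi(n)D(n)\asymp\sum_{n\le N}\psi(n)\mathcal{M}(n)$ and the $\varphi$-weighted analogue that feed into Theorem~\ref{duffin}.
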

Duffin-Schaeffer theorem is crucial to the proof of Theorems \ref{thm2} and \ref{thmnew}. However
Duffin-Schaeffer theorem requires good match between sequence $\psi(n)$ and Euler function $\varphi(n)$, so that hypotheses  \eqref{Euler1}, \eqref{equ13} and \eqref{equ13new} are very important.  For some nice functions $\psi(n)$,
Duffin-Schaeffer theorem can be improved \cite{beresnevich2013duffin,haynes2012duffin,Li1,Li2}.
  We will use  \cite[Theorem 1.17]{Li2} to study the mixed Littlewood conjecture and find that restriction \eqref{Euler1} is not necessary in some sense.

   Given $n\in\N$ and $x\in\R$, define
   \begin{equation*}
    ||nx||^\prime=\min\{|nx-p|:p\in\Z, (n,p)=1\}.
   \end{equation*}

 \begin{theorem}\label{thmnewhar}
Let $\mathcal{D}=\{n_k\}$ be a pseudo-absolute value sequence and define
\begin{equation}\label{GMhar}
		\mathfrak{M}(n)=\sum_{n_{k } \leq n}  \frac{\varphi(n_{k})}{n_k}.
	\end{equation}
Suppose $\epsilon\geq0$.
Then   for almost every $\alpha\in\R$
	\begin{equation}\label{Ghar1}
	\liminf_{n\to\infty} n \mathfrak{M}(n)(\log n)^{1+\epsilon}	  |{n}|_{\mathcal{D}  } ||{n\alpha}||^{\prime} = 0,
	\end{equation}
if and only if $\epsilon=0$.


\end{theorem}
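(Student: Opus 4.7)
The statement reduces, via the standard dictionary between $\liminf$ statements and infinite-solutions statements, to the following: writing $\psi_\epsilon(n)=1/(n\,\mathfrak{M}(n)\,(\log n)^{1+\epsilon})$, for almost every $\alpha$ the inequality $|n|_{\mathcal{D}}\,\|n\alpha\|'\le\psi_\epsilon(n)$ has infinitely many coprime solutions $(n,p)\in\mathbb N\times\mathbb Z$ if and only if $\epsilon=0$. So the plan splits into the convergence half ($\epsilon>0$) and the divergence half ($\epsilon=0$), and the inequality is equivalent to $\|n\alpha\|'\le\Psi_\epsilon(n)$ with $\Psi_\epsilon(n)=\psi_\epsilon(n)/|n|_{\mathcal{D}}$, to which Duffin--Schaeffer-type machinery is meant to be applied.

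For the convergence side, I would apply the Borel--Cantelli lemma to the sets $E_n=\bigcup_{(p,n)=1}(p/n-\Psi_\epsilon(n)/n,\,p/n+\Psi_\epsilon(n)/n)$, whose Lebesgue measure is $2\varphi(n)\Psi_\epsilon(n)/n$ for large $n$. The core reduction is the unconditional elementary estimate
\[
F(N):=\sum_{n\le N}\frac{\varphi(n)}{n\,|n|_{\mathcal{D}}}\;\le\;N\,\mathfrak{M}(N),
\]
which I would prove by partitioning $n\le N$ according to $k(n)=\max\{k:n_k\mid n\}$, writing $n=n_k m$ with $m\le N/n_k$, and using the trivial inequality $\varphi(n_k m)/(n_k m)\le\varphi(n_k)/n_k$ (adjoining prime factors only decreases $\prod_p(1-1/p)$). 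Feeding this into Abel summation against the decreasing weight $1/(n\,\mathfrak{M}(n)\,(\log n)^{1+\epsilon})$ collapses $\sum_n\lambda(E_n)$ to a constant multiple of $\sum_n 1/(n(\log n)^{1+\epsilon})$, which converges precisely when $\epsilon>0$; Borel--Cantelli then gives $\liminf>0$ a.e.

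For the divergence side, the classical Duffin--Schaeffer theorem cannot be applied directly, because the weight $\Psi_0(n)=n_{k(n)}/(n\,\mathfrak{M}(n)\log n)$ is wildly non-monotone in $n$ due to the factor $1/|n|_{\mathcal{D}}$. This is exactly the obstacle advertised in the introduction, and my plan is to invoke \cite[Theorem 1.17]{Li2}, which the author states is precisely designed to remove the monotonicity/Euler hypotheses of Duffin--Schaeffer for weights of this type. To feed the hypothesis of that theorem, I would match the Abel-summation computation above with a lower bound $F(N)\gtrsim N\,\mathfrak{M}(N)$ up to harmless factors: on each level $k$ one partitions $m\le N/n_k$ by coprimality with $n_{k+1}$ and uses the elementary sieve estimate $\sum_{m\le M,\,(m,n_{k+1})=1}\varphi(m)/m\gtrsim M\,\varphi(n_{k+1})/n_{k+1}$ on the $(m,n_k)=1$ slice, producing a contribution $\gtrsim N\,\varphi(n_k)/n_k$ per level. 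Summing over $k$ and applying Abel summation in reverse shows $\sum_n\varphi(n)\psi_0(n)/(n|n|_{\mathcal{D}})\asymp\sum_n 1/(n\log n)=\infty$, at which point \cite[Theorem 1.17]{Li2} delivers the required infinitude of coprime solutions for a.e.\ $\alpha$.

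The main obstacle is the divergence half, and in particular the verification that our non-monotone $\Psi_0$ satisfies the precise technical hypotheses of \cite[Theorem 1.17]{Li2}; this is the step where the fact that $\mathcal{D}$ need not satisfy the Harrap--Haynes hypothesis \eqref{Euler1} is used, and it is where the refinement from $\mathcal{M}(n)$ to $\mathfrak{M}(n)$ is actually extracted. The matching lower bound for $F(N)$ also requires real care (as opposed to the trivial upper bound), because when $\mathcal{D}$ has elements whose Euler quotients $\varphi(n_k)/n_k$ are small, a naive bound overshoots and one must track the coprime structure level by level.
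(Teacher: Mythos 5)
Your convergence-side argument ($\epsilon>0$) is essentially the paper's: the unconditional partial-sum bound $\sum_{n\le N}\varphi(n)/(n|n|_{\mathcal D})\le N\,\mathfrak M(N)$ is obtained by the same level-$k$ decomposition and the inequality $\varphi(n_k m)\le m\,\varphi(n_k)$, then Abel summation against the decreasing weight $\psi_\epsilon$ gives $\sum_n\lambda(E_n)\lesssim\sum_n\psi_\epsilon(n)\mathfrak M(n)\asymp\sum_n 1/(n(\log n)^{1+\epsilon})<\infty$, and Borel--Cantelli finishes. (The paper routes the bookkeeping through Lemma~\ref{Lemar23}, i.e.\ $N\mathfrak M(N)\asymp\sum_{n\le N}\mathfrak M(n)$, but this is the same computation.)

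The divergence half ($\epsilon=0$) has a genuine gap: you name \cite[Theorem 1.17]{Li2} as the tool, but the plan you sketch would not verify its hypothesis. That hypothesis is not ``$\sum_n\varphi(n)\psi_0(n)/(n|n|_{\mathcal D})=\infty$''; it is a condition on the doubly-dyadic block sums $G_n=\sum_{k=2^{2^n}+1}^{2^{2^{n+1}}}\psi(k)\varphi(k)/k$, and it only collects terms with $G_n\ge 3$. In the present problem one finds $G_N\asymp 1$ (bounded above and below), with no control over whether the lower constant exceeds $3$; if $G_n<3$ for all $n$ the sum in \eqref{GLi1} is empty and the hypothesis fails outright, even though $\sum_n\varphi(n)\psi_0(n)/(n|n|_{\mathcal D})$ diverges. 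The missing idea is the paper's rescaling argument: first prove the uniform bound $G_N\ge c$ (this requires the block partial-sum estimate for $\sum_{j=2^{2^N}+1}^{n}\varphi(j)/(j|j|_{\mathcal D})$ rather than for the full sum from $j=1$, with the error terms controlled by Lemma~\ref{apple}); then replace $\psi_0$ by $C\varepsilon\psi_0$ with $C$ large enough that the rescaled $G_N\ge 3$ and apply \cite[Theorem 1.17]{Li2} to get $\lambda(\limsup E_n(C\varepsilon\psi_0))=1$; finally pass back down to $\varepsilon\psi_0$ using the subhomogeneity inequality $\lambda(\limsup E_n(t\psi))\le t\,\lambda(\limsup E_n(\psi))$ together with Gallagher's zero-one law (Theorem~\ref{zeroone}). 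Without the rescaling / subhomogeneity / zero-one-law step your argument does not close. A smaller point: after substituting $j=n_k m$, the correct residual condition is the single non-divisibility $d_{k+1}\nmid m$ with $d_{k+1}=n_{k+1}/n_k$, for which Lemma~\ref{keysection2li} gives the uniform lower bound $(4/\pi^2)M-O(\log M)$; your proposed condition $(m,n_{k+1})=1$ is stronger and injects a spurious factor $\varphi(n_{k+1})/n_{k+1}$ that need not be bounded below.
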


\section{Proof of Theorem \ref{thm2}}
In this paper, we always assume $C$ ($c$) is a large (small) constant, which is different even in the same equation.
We should mention that the constant $C$ ($c$) also  depends on $c_1,c_2$ and $c_3$ in the Theorems.

Before we give the proof of  Theorem \ref{thm2}, some preparations are necessary.
\begin{lemma}\cite[Lemma 2]{bugeaud2011metric}\label{lem1}
Let  $p_1,\ldots , p_k$ be  distinct prime numbers and  $N\in\N$.
Then
\begin{equation*}
\sum_{\substack{n\le N\\p_1,\ldots , p_k\nmid n}}\frac{\varphi
(n)}{n}=\frac{6N}{\pi^2}\prod_{i=1}^k
\frac{p_i}{p_i+1}+O\left(\log N\right).
\end{equation*}
\end{lemma}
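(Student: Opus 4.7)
The plan is to reduce to the classical asymptotic $\sum_{n\le N}\varphi(n)/n = (6/\pi^2)N + O(\log N)$ by using M\"obius inversion and inclusion--exclusion to enforce the coprimality conditions $p_i\nmid n$. Set $P=p_1\cdots p_k$, so the summation condition is $(n,P)=1$, and write
\begin{equation*}
S(N)=\sum_{\substack{n\le N\\(n,P)=1}}\frac{\varphi(n)}{n}=\sum_{\substack{n\le N\\(n,P)=1}}\sum_{d\mid n}\frac{\mu(d)}{d},
\end{equation*}
using the elementary identity $\varphi(n)/n=\sum_{d\mid n}\mu(d)/d$. Swapping the order of summation and noting that $(d,P)=1$ whenever $d\mid n$ and $(n,P)=1$, I obtain
\begin{equation*}
S(N)=\sum_{\substack{d\le N\\(d,P)=1}}\frac{\mu(d)}{d}\,\#\{m\le N/d:(m,P)=1\}.
\end{equation*}

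Next I would evaluate the inner count by inclusion--exclusion on the divisors of $P$:
\begin{equation*}
\#\{m\le M:(m,P)=1\}=\sum_{e\mid P}\mu(e)\lfloor M/e\rfloor = M\prod_{i=1}^{k}\Bigl(1-\tfrac{1}{p_i}\Bigr)+O(2^k),
\end{equation*}
where the error is $O(1)$ since $k$ is fixed. Substituting this back splits $S(N)$ into a main term
\begin{equation*}
\prod_{i=1}^{k}\frac{p_i-1}{p_i}\cdot N\sum_{\substack{d\le N\\(d,P)=1}}\frac{\mu(d)}{d^{2}}
\end{equation*}
and an error of size $O\!\bigl(\sum_{d\le N}1/d\bigr)=O(\log N)$.

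For the main term I would complete the Dirichlet series to infinity, using the Euler product
\begin{equation*}
\sum_{\substack{d=1\\(d,P)=1}}^{\infty}\frac{\mu(d)}{d^{2}}=\prod_{q\text{ prime},\,q\nmid P}\!\Bigl(1-\tfrac{1}{q^{2}}\Bigr)=\frac{1/\zeta(2)}{\prod_{i=1}^{k}(1-1/p_i^{2})}=\frac{6/\pi^{2}}{\prod_{i=1}^{k}(1-1/p_i^{2})},
\end{equation*}
incurring a tail error $N\cdot O(1/N)=O(1)$. Finally, the algebraic identity
\begin{equation*}
\prod_{i=1}^{k}\frac{p_i-1}{p_i}\cdot\frac{1}{\prod_{i=1}^{k}(1-1/p_i^{2})}=\prod_{i=1}^{k}\frac{p_i}{p_i+1}
\end{equation*}
consolidates the constant into the desired product, yielding the stated asymptotic. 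The only real obstacle is the careful bookkeeping of error terms, in particular verifying that the contribution of the $O(1)$ term in the inclusion--exclusion count, summed over $d\le N$ with weight $|\mu(d)|/d$, is indeed $O(\log N)$ rather than worse; everything else is a routine M\"obius/Euler product manipulation.
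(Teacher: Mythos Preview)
Your argument is correct: the M\"obius identity $\varphi(n)/n=\sum_{d\mid n}\mu(d)/d$, the swap of summation with the coprimality restriction $(d,P)=1$, the inclusion--exclusion count with error $O(2^k)=O(1)$, the completion of the Euler product to $1/\zeta(2)$ divided by the missing local factors, and the final algebraic simplification all go through exactly as you describe, and the error bookkeeping is sound.

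As for comparison, the paper does not actually prove this lemma; it is quoted directly from \cite[Lemma~2]{bugeaud2011metric} and used only as a black box to deduce the cruder lower bound of Lemma~\ref{keysection2}. Your proof is precisely the standard one (and essentially the one given in \cite{bugeaud2011metric}), so there is nothing to contrast.
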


Obviously, Lemma \ref{lem1} implies the following lemma.
\begin{lemma}\label{keysection2}
Suppose $d_1,d_2,\cdots, d_m\geq 2$. Then there exists some $d>0$ only depending on $m$ such that
\begin{equation*}
\sum_{\substack{n=1\\d_1\nmid n,d_2\nmid n,\cdots d_m\nmid n}}^N\frac{\varphi (n)}{n}\ge dN\quad\text{for any }~ N\in\N  .
\end{equation*}
\end{lemma}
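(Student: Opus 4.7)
The plan is to reduce the general case of integer moduli $d_i \geq 2$ to the prime case handled by Lemma \ref{lem1}. For each $i \in \{1, 2, \ldots, m\}$, I would choose a prime divisor $p_i$ of $d_i$ (any such choice will do). Since $p_i \mid d_i$, the condition $p_i \nmid n$ is \emph{stronger} than $d_i \nmid n$, so the set of $n \leq N$ with $p_i \nmid n$ for every $i$ is a subset of the set of $n \leq N$ with $d_i \nmid n$ for every $i$. Because $\varphi(n)/n \geq 0$, this containment gives
\begin{equation*}
\sum_{\substack{n \leq N \\ d_1 \nmid n,\, \ldots,\, d_m \nmid n}} \frac{\varphi(n)}{n} \;\geq\; \sum_{\substack{n \leq N \\ p_1 \nmid n,\, \ldots,\, p_m \nmid n}} \frac{\varphi(n)}{n}.
\end{equation*}

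Next, I would discard any repetitions in the list $p_1, \ldots, p_m$ (removing duplicate divisibility constraints does not change the set being summed over), yielding a subcollection of $m' \leq m$ \emph{distinct} primes $q_1, \ldots, q_{m'}$. Now Lemma \ref{lem1} applies directly and gives
\begin{equation*}
\sum_{\substack{n \leq N \\ q_1 \nmid n,\, \ldots,\, q_{m'} \nmid n}} \frac{\varphi(n)}{n} \;=\; \frac{6 N}{\pi^2} \prod_{j=1}^{m'} \frac{q_j}{q_j + 1} + O(\log N).
\end{equation*}
The final ingredient is a uniform lower bound on the product. Since $q \mapsto q/(q+1)$ is increasing in $q$ and each factor lies in $(0,1)$, the product is smallest when $m' = m$ and the $q_j$ are the $m$ smallest primes. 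Writing $p_j^\ast$ for the $j$-th prime, this yields
\begin{equation*}
\prod_{j=1}^{m'} \frac{q_j}{q_j+1} \;\geq\; \prod_{j=1}^{m} \frac{p_j^\ast}{p_j^\ast + 1} \;=:\; C_m \;>\; 0,
\end{equation*}
a positive constant depending only on $m$.

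Combining the three estimates, for all $N$ larger than some $N_0 = N_0(m)$ the main term $(6 C_m/\pi^2) N$ absorbs the $O(\log N)$ error and we obtain the desired inequality with, say, $d = 3 C_m / \pi^2$. For the finitely many small $N < N_0$, the term $n=1$ is always present in the sum (since $d_i \geq 2$ forces $d_i \nmid 1$), contributing $\varphi(1)/1 = 1$, so further shrinking $d$ handles these cases uniformly. The only conceptual point to verify is that the lower bound for the product of $p_j/(p_j+1)$ is uniform over all admissible choices of $d_1, \ldots, d_m$, which is exactly what the ``worst case is the smallest primes'' observation guarantees; this is the one step where the independence of $d$ from the specific moduli (as opposed to just $m$) becomes visible.
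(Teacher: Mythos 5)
Your argument is correct and is precisely the reasoning the paper leaves implicit (it says only ``Obviously, Lemma \ref{lem1} implies the following lemma''): reduce each $d_i$ to a chosen prime factor, drop duplicates, apply Lemma \ref{lem1}, and bound the resulting Euler product below by the one over the $m$ smallest primes. One small point worth making explicit is that your conclusion $N_0=N_0(m)$ requires the implied constant in the $O(\log N)$ error of Lemma \ref{lem1} to depend only on the number of primes $k$ and not on the primes themselves; this is indeed the case (the error comes from at most $2^k$ boundary terms in the inclusion--exclusion over the squarefree divisors of $p_1\cdots p_k$), and it is this uniformity, together with the product bound you already gave, that makes $d$ depend only on $m$.
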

%
%
%
\begin{theorem}[Duffin-Schaeffer \cite{duffin1941khintchine}]\label{duffin}
Suppose $\sum_{n=1}^{ \infty}  \psi (n)=\infty$ and
\begin{equation*}
 \limsup_{N\to\infty}\left(\sum_{n=1}^{ N} \frac{\varphi (n)}{n}\psi (n)\right)  \left(\sum_{n=1}^{ N}  \psi (n) \right)^{-1} \, >  \, 0  \ .
\end{equation*}
Then for almost every $\alpha$, the inequality
\begin{equation*}
    |n\alpha-p|\leq \psi (n)
\end{equation*}
has infinitely  many coprime solutions $(n,p)\in\N\times\Z$.
\end{theorem}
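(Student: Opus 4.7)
The plan is to prove this classical theorem by the Paley--Zygmund second-moment method applied to the counting function $S_N(\alpha) := \sum_{n \leq N} \mathbf{1}_{A_n}(\alpha)$, where $A_n = \mathcal{E}_n \cap [0,1)$ (identifying $[0,1)$ with $\R/\Z$). After discarding the finitely many indices $n$ with $\psi(n) > 1/2$ (these do not affect the $\limsup$ set or the divergence hypothesis, since truncating a decreasing function of finitely many terms is harmless), the $\varphi(n)$ intervals constituting $A_n$ are pairwise disjoint in $[0,1)$ and
\[
\lambda(A_n) = \frac{2\varphi(n)\psi(n)}{n}.
\]
The limsup hypothesis combined with $\sum \psi(n) = \infty$ yields a subsequence $N_k \to \infty$ along which $\sum_{n \leq N_k} \lambda(A_n) \geq c \sum_{n \leq N_k} \psi(n) \to \infty$ for some $c > 0$.

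The crux is a quasi-independence bound of the form
\[
\sum_{m, n \leq N_k} \lambda(A_m \cap A_n) \;\leq\; C \Bigl(\sum_{n \leq N_k} \lambda(A_n)\Bigr)^2
\]
with $C$ depending only on the limsup value. Paley--Zygmund then gives $\lambda\bigl(S_{N_k} \geq \tfrac{1}{2} \int S_{N_k}\bigr) \geq \frac{1}{4C}$; since $\int S_{N_k} \to \infty$ and $S_N$ is nondecreasing in $N$, this forces $\lambda(\limsup A_n) \geq \frac{1}{4C} > 0$. A Cassels/Gallagher-type zero-one law for $\limsup$ sets of rational neighbourhoods (exploiting invariance under translation by rationals $a/q$) upgrades positive measure to full measure.

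To establish the pairwise bound, for $m \neq n$ set $d := \gcd(m,n)$, $m = d m'$, $n = d n'$ with $(m',n')=1$, and observe that two reduced fractions $p/m$, $q/n$ produce overlapping intervals only if $|p n - q m| \leq m\psi(n) + n\psi(m)$; this integer is automatically divisible by $d$, so the number of admissible differences is at most $1 + (m'\psi(n) + n'\psi(m))/1$. Counting such pairs subject to the coprimality conditions $(p,m) = (q,n) = 1$ via a M\"obius inversion on the residue classes, one derives an estimate
\[
\lambda(A_m \cap A_n) \;\leq\; C \lambda(A_m)\lambda(A_n) \cdot \frac{d/\varphi(d)}{(\varphi(m)/m)(\varphi(n)/n)} + \text{(diagonal correction)},
\]
where the diagonal correction is summable and absorbed by $\sum_n \lambda(A_n)$ itself.

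The main obstacle will be the GCD decomposition in the pairwise overlap: after splitting $\sum_{m,n}$ as $\sum_d \sum_{m',n'}$ with $(m',n')=1$, one must show the $d$-contributions are controlled. Precisely here the limsup hypothesis enters -- it prevents $\psi$ from concentrating on integers with $\varphi(n)/n$ arbitrarily small, which is exactly the obstruction to quasi-independence in the D--S setting. (The removal of this hypothesis is the Duffin--Schaeffer conjecture proper, only very recently resolved by Koukoulopoulos--Maynard.) Once the overlap sum is tamed along $N_k$, the three ingredients -- Paley--Zygmund, the overlap bound, and the zero-one law -- combine to give $\lambda(\limsup A_n) = 1$, which is exactly the conclusion.
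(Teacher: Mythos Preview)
The paper does not prove this statement at all: Theorem~\ref{duffin} is quoted from the literature (Duffin--Schaeffer, 1941) and used as a black box in the proofs of Theorems~\ref{thm2} and~\ref{thmnew}. So there is no ``paper's own proof'' to compare against.

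That said, your sketch is broadly the classical route (second moment / quasi-independence plus Gallagher's zero--one law), but two points deserve tightening. First, you write ``discarding the finitely many indices $n$ with $\psi(n)>1/2$'' and justify this by calling $\psi$ decreasing; the hypotheses of the theorem do \emph{not} assume monotonicity of $\psi$, so this reduction must be done differently (replace $\psi$ by $\min(\psi,1/2)$ and check the two hypotheses survive, or argue separately when $\lambda(A_n)$ fails to be $2\varphi(n)\psi(n)/n$). Second, your displayed overlap bound with the factor $\dfrac{d/\varphi(d)}{(\varphi(m)/m)(\varphi(n)/n)}$ is not the estimate that drives the original argument; the classical Duffin--Schaeffer overlap lemma gives, for $m\neq n$, simply
\[
\lambda(A_m\cap A_n)\;\le\; C\,\psi(m)\psi(n),
\]
whence $\sum_{m,n\le N}\lambda(A_m\cap A_n)\le C\bigl(\sum_{n\le N}\psi(n)\bigr)^2$, and it is precisely at this point that the $\limsup$ hypothesis is invoked to replace $\sum\psi(n)$ by $c^{-1}\sum\lambda(A_n)$ along the subsequence $N_k$. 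Your more elaborate bound would not close the loop without additional averaging. With these two corrections the outline is sound; but for the purposes of the present paper no proof is needed, only the citation.
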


Suppose $\mathcal{D}_1=\{n_{k}^1\},\mathcal{D}_2=\{n_{k}^2\}, \cdots ,\mathcal{D}_m=\{n_{k}^m\}$  are  $m$ pseudo-absolute value sequences.
Denote $d_{k+1}^j=\frac{n_{k+1}^j}{n_{k}^j}$ for $j=1,2,\cdots,m$.
Define  a  subset  $S(n)$  of $\N^m$ as follows:
\begin{equation*}
    S(n)=\{(k_1,k_2,\cdots,k_m):(k_1,k_2,\cdots,k_m)\in \N^m \text{ and }  \text{lcm}(n_{k_1}^1,n_{k_2}^2,\cdots ,n_{k_m}^m) \leq n\},
\end{equation*}
where  lcm($k_1,k_2\cdots,k_m$) means the least common  multiple number of $k_1,k_2\cdots,k_m$.
For any $(k_1,k_2,\cdots,k_m)\in S(n)$,
we  define $f(n;k_1,k_2,\cdots,k_m)\in \N$ as the largest positive integer  such that
\begin{equation*}
    \text{lcm}(n_{k_1}^1,n_{k_1}^2,\cdots ,n_{k_m}^m)f(n;k_1,k_2,\cdots,k_m)\leq n.
\end{equation*}

\par
{\bf Proof of Theorem \ref{thm2}.}

Without of loss of generality, assume $\alpha\in[0,1)$.
Define
\begin{equation*}
    {\mathcal E}_n={\mathcal E}_n(\psi_0)=\bigcup_{p=1 \atop
(p,n)=1}^n\left(\frac{p-\psi_0(n)}{n},\frac{p+\psi_0(n)}{n}\right),
\end{equation*}
where
\begin{equation*}
    \psi_0(n)= \frac{  \psi(n)}{ |{n}|_{\mathcal{D}_1}|{n}|_{\mathcal{D}_2}\cdots |n|_{\mathcal{D}_m}}.
\end{equation*}
The Lebesgue measure of ${\mathcal E}_n$ is obviously bounded above
by $\frac{2\psi_0(n)}{n}\varphi(n)$.
Obviously, coprime pair $(n,p)\in\N\times\Z$ is a solution of $|n\alpha-p|\leq \psi_0(n)$ if and only if
$\alpha\in  {\mathcal E}_n$.

If
\begin{equation*}
    \sum_{n=1}^{\infty} \frac{  \psi(n)}{ |{n}|_{\mathcal{D}_1}|{n}|_{\mathcal{D}_2}\cdots |n|_{\mathcal{D}_m}}< \infty,
\end{equation*}
one has
\begin{equation}\label{Gmeasure1}
   \sum_n \lambda( {\mathcal E}_n)<\infty.
\end{equation}
By  Borel-Cantelli Lemma,  the inequality
	\begin{equation*}
	  |{n}|_{\mathcal{D}_1 }|{n}|_{\mathcal{D}_2}\cdots |{n}|_{\mathcal{D}_m }|{n\alpha}-p| \le \psi(n)
	\end{equation*}
	has infinitely   many solutions $(n,p) \in \mathbb{N}\times \Z$ only for a  zero Lebesgue measure set of $\alpha$.

Now we start to prove the other side.
First, one has
\begin{equation*}
    \sum_{n=1}^N \frac{\varphi(n)\psi(n)}{n|{n}|_{\mathcal{D}_1}|{n}|_{\mathcal{D}_2}\cdots |n|_{\mathcal{D}_m}}
\end{equation*}
\begin{equation}\label{equ11}
	   =    \sum_{n=1}^N
	\left(\psi(n)-\psi(n+1) \right) \sum_{j=1}^n \frac{\varphi(j)}{j|{j}|_{\mathcal{D}_1}|{j}|_{\mathcal{D}_2}\cdots |j|_{\mathcal{D}_m}}
	+ \psi(N+1)\sum_{j=1}^N  \frac{\varphi(j)}{j|{j}|_{\mathcal{D}_1}|{j}|_{\mathcal{D}_2}\cdots |j|_{\mathcal{D}_m}}.
\end{equation}
Now we are in the position to estimate the inner sums. Direct computation implies

$$\sum_{j=1}^n \frac{\varphi(j)}{ j|{j}|_{\mathcal{D}_1}|{j}|_{\mathcal{D}_2}\cdots |j|_{\mathcal{D}_m}}\;\;\;\;\;\;\;\;\;\;\;\;\;\;\;\;\;\;\;\;\;\;\;\;\;\;\;\;\;\;\;\;\;\;\;\;\;\;\;\;\;\;\;\;\;\;\;\;\;\;\;\;\;\;\;\;\;\;\;\;\;\;\;\;\;\;\;\;\;\;\;\;$$
\begin{align}
&=\sum_{(k_1,k_2,\cdots,k_m)\in S(n)}\sum_{\substack{j=1\\n_{k_1}^1|j,n_{k_2}^2|j,\cdots,n_{k_m}^m|j\\~n_{k_1+1}^1\nmid j,n_{k_2+1}^2\nmid j,\cdots,n_{k_m+1}^m\nmid j}}^n\frac{\varphi(j)}{ j|{j}|_{\mathcal{D}_1}|{j}|_{\mathcal{D}_2}\cdots |j|_{\mathcal{D}_m}}\\
&= \sum_{(k_1,k_2,\cdots,k_m)\in S(n)}\frac{n_{k_1}^1n_{k_2}^2\cdots n_{k_m}^m}{\text{lcm}(n_{k_1}^1,n_{k_2}^2,\cdots ,n_{k_m}^m)}\sum_{\substack{1\le j\le f(n;k_1,k_2,\cdots,k_m)\\d_{k_1+1}^1\nmid j,d_{k_2+1}^2\nmid j,\cdots d_{k_m+1}^m\nmid j}}\frac{\varphi(\text{lcm} (n_{k_1}^1,n_{k_2}^2,\cdots ,n_{k_m}^m) j)}{j}\nonumber\\
&\ge\sum_{(k_1,k_2,\cdots,k_m)\in S(n)} n_{k_1}^1n_{k_2}^2\cdots n_{k_m}^m\frac{\varphi(\text{lcm}(n_{k_1}^1,n_{k_2}^2,\cdots, n_{k_m}^m ))}{\text{lcm}(n_{k_1}^1,n_{k_2}^2,\cdots, n_{k_m}^m )}\sum_{\substack{1\le j\le f(n;k_1,k_2,\cdots,k_m)\\d_{k_1+1}^1\nmid j,d_{k_2+1}^2\nmid j,\cdots d_{k_m+1}^m\nmid j}}\frac{\varphi(j)}{j}\nonumber\\
&\ge c\sum_{(k_1,k_2,\cdots,k_m)\in S(n)}f(n;k_1,k_2,\cdots,k_m) \varphi(n_{k_1}^1n_{k_2}^2\cdots n_{k_m}^m ),\label{equ1}
\end{align}
where the first inequality holds by the fact that $\varphi(mn)\geq \varphi(m)\varphi(n)$ and the second inequality holds by Lemma \ref{keysection2} and the fact that
\begin{equation*}
   \frac{\varphi(\text{lcm}(n_{k_1}^1,n_{k_2}^2,\cdots, n_{k_m}^m ))}{\text{lcm}(n_{k_1}^1,n_{k_2}^2,\cdots, n_{k_m}^m )}=\frac{\varphi( n_{k_1}^1n_{k_2}^2\cdots n_{k_m}^m )}{n_{k_1}^1n_{k_2}^2\cdots n_{k_m}^m }.
\end{equation*}

By   \eqref{equ13} and \eqref{equ1}, we get
\begin{eqnarray}\label{finall2}
  \sum_{j=1}^n \frac{\varphi(j)}{ j|{j}|_{\mathcal{D}_1}|{j}|_{\mathcal{D}_2}\cdots |j|_{\mathcal{D}_m}}
   &\geq& c\sum_{(k_1,k_2,\cdots,k_m)\in S(n)}n_{k_1}^1n_{k_2}^2 \cdots  n_{k_m}^mf(n;k_1,k_2,\cdots,k_m) .
\end{eqnarray}
One the other hand, we have
\begin{align}
\sum_{j=1}^n\frac{1}{|j|_{\mathcal{D}_1}|j|_{\mathcal{D}_2}\cdots |j|_{\mathcal{D}_m}}&=\sum_{(k_1,k_2,\cdots,k_m)\in S(n)} n_{k_1}^1n_{k_2}^2\cdots n_{k_m}^m\sum^n_{\substack{j=1\\n_{k_1}^1|j,n_{k_2}^2|j,\cdots,n_{k_m}^m|j\\~n_{k_1+1}^1\nmid j,n_{k_2+1}^2\nmid j,\cdots,n_{k_m+1}^m\nmid j}}1\label{equnew}\\
&\leq \sum_{(k_1,k_2,\cdots,k_m)\in S(n)}n_{k_1}^1n_{k_2}^2 \cdots  n_{k_m}^mf(n;k_1,k_2,\cdots,k_m).\label{finall1}
\end{align}
Finally, putting \eqref{finall2} and \eqref{finall1} together,  we get
\begin{equation*}
    \sum_{j=1}^n \frac{\varphi(j)}{ j|{j}|_{\mathcal{D}_1}|{j}|_{\mathcal{D}_2}\cdots |j|_{\mathcal{D}_m}} \geq c\sum_{j=1}^n\frac{1}{|j|_{\mathcal{D}_1}|j|_{\mathcal{D}_2}\cdots |j|_{\mathcal{D}_m}}.
\end{equation*}
Combining with  \eqref{equ11},
we have
\begin{eqnarray*}
   \sum_{n=1}^N \frac{\varphi(n)\psi(n)}{n|{n}|_{\mathcal{D}_1}|{n}|_{\mathcal{D}_2}\cdots |n|_{\mathcal{D}_m}} &\geq& \sum_{n=1}^N
	\left(\psi(n)-\psi(n+1) \right) \sum_{j=1}^n \frac{c}{|{j}|_{\mathcal{D}_1}|{j}|_{\mathcal{D}_2}\cdots |j|_{\mathcal{D}_m}}
	+ \psi(N+1)\sum_{j=1}^N  \frac{c}{|{j}|_{\mathcal{D}_1}|{j}|_{\mathcal{D}_2}\cdots |j|_{\mathcal{D}_m}} \\
   &\geq& c \sum_{n=1}^N \frac{ \psi(n)}{ |{n}|_{\mathcal{D}_1}|{n}|_{\mathcal{D}_2}\cdots |n|_{\mathcal{D}_m}}.
\end{eqnarray*}
Now Theorem \ref{thm2} follows from \eqref{equ14} and Theorem \ref{duffin}.

\section{Proof of Theorem \ref{thmnew}}
The proof of Theorem \ref{thmnew} is similar to the proof of Theorem \ref{thm2} or
\eqref{equ12}. We need one lemma first.
Denote
\begin{equation*}
    \mathcal{M}(n)=\#\{(k_1,k_2,\cdots,k_m):n_{k_1}^1n_{k_2}^2\cdots n_{k_m}^m\leq n\}-1.
\end{equation*}
\begin{lemma}\label{Le18}
Under the conditions of Theorem \ref{thmnew},
the following estimate holds,
\begin{equation}\label{1829equ2}
  N\mathcal{M}(N)\asymp \sum_{n=1}^N \mathcal{M}(n).
\end{equation}
\end{lemma}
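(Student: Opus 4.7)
The plan is to swap the order of summation and feed the resulting product sum into hypothesis \eqref{1829equ1}. Write $A(n)=\#\{(k_1,\ldots,k_m):n_{k_1}^1 n_{k_2}^2\cdots n_{k_m}^m\leq n\}$, so that $\mathcal{M}(n)=A(n)-1$.

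The upper bound is immediate: since $\mathcal{M}$ is non-decreasing in $n$, $\sum_{n=1}^N \mathcal{M}(n)\leq N\mathcal{M}(N)$.

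For the lower bound, the idea is to compute $\sum_{n=1}^N A(n)$ by interchanging sums. Each tuple $(k_1,\ldots,k_m)$ with $P:=n_{k_1}^1\cdots n_{k_m}^m\leq N$ is counted in $A(n)$ for precisely the values $n=P,P+1,\ldots,N$, contributing $N-P+1$ to the total. Hence
\begin{equation*}
\sum_{n=1}^N A(n)\;=\;\sum_{P\leq N}(N-P+1)\;=\;N\,A(N)\;-\;\sum_{P\leq N}P\;+\;A(N),
\end{equation*}
where $P$ ranges over products $n_{k_1}^1\cdots n_{k_m}^m\leq N$. Now I would apply hypothesis \eqref{1829equ1}, which gives $\sum_{P\leq N}P\leq c_3 N A(N)$ with $c_3<1$, so that
\begin{equation*}
\sum_{n=1}^N \mathcal{M}(n)\;=\;\sum_{n=1}^N A(n)-N\;\geq\;(1-c_3)\,N\,A(N)+A(N)-N.
\end{equation*}

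The final step is to absorb the $-N$ term. Since the sequences $\mathcal{D}_j$ are strictly increasing, $A(N)\to\infty$ as $N\to\infty$, so for all sufficiently large $N$ one has $(1-c_3)NA(N)\geq 2N$, and therefore
\begin{equation*}
\sum_{n=1}^N \mathcal{M}(n)\;\geq\;\tfrac{1-c_3}{2}\,N\,A(N)\;\geq\;\tfrac{1-c_3}{2}\,N\,\mathcal{M}(N).
\end{equation*}
For small $N$ both sides are $O(1)$ and can be handled by adjusting constants, yielding the full $\asymp$ relation.

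The only subtle point is that hypothesis \eqref{1829equ1} is used precisely where one would naively expect to need a crude bound like $P\leq N$ (which would give only $\sum P\leq N A(N)$ with constant $1$, insufficient). The strict inequality $c_3<1$ is what allows $(1-c_3)NA(N)$ to dominate the $-N$ correction coming from the $-1$ in the definition of $\mathcal{M}$; this is the main, and really the only, obstacle in the argument.
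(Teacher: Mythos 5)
Your proof is correct and follows essentially the same approach as the paper: the paper rearranges the products $n_{k_1}^1\cdots n_{k_m}^m$ into a monotone sequence $\{t_k\}$ and evaluates $\sum_{n\leq N}\mathcal{M}(n)$ piecewise, arriving at the identity $(N+1)\mathcal{M}(N)-\sum_{k\leq\mathcal{M}(N)}t_k$, which is exactly what your interchange of summation produces (up to the $\pm1$ from $\mathcal{M}=A-1$). Your treatment is actually a bit more careful than the paper's in explicitly absorbing the $-N$ correction term using $A(N)\to\infty$, a point the paper glosses over.
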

\begin{proof}
It suffices to show that
\begin{equation*}
   N\mathcal{M}(N)\leq O(1) \sum_{n=1}^N \mathcal{M}(n).
\end{equation*}

We rearrange  $n_{k_1}^1 n_{k_2}^2\cdots n_{k_m}^m$ as a monotone sequence $t_0=1,t_1,t_2,\cdots, t_k\cdots$. Then, we have
\begin{eqnarray}
  \sum_{n=1}^N \mathcal{M}(n) &=& \sum_{k=0}^{\mathcal{M}(N)-1} k (t_{k+1}-t_k)+\mathcal{M}(N)(N-t_{\mathcal{M}(N)}+1)\nonumber\\
   &=& (N+1)\mathcal{M}(N)-  \sum_{k=0}^{\mathcal{M}(N)}t_k.\label{Gmar231}
\end{eqnarray}
By the assumption \eqref{1829equ1}, one has
\begin{equation}\label{Gmar232}
 \sum_{k=0}^{\mathcal{M}(N)}t_k\leq c_3 N\mathcal{M}(N),
\end{equation}
for some $0<c_3<1$.

Now the Lemma follows from
  \eqref{Gmar231} and \eqref{Gmar232}.
\end{proof}
{\bf Proof of Theorem \ref{thmnew}}

We employ the same notations as in the proof of Theorem \ref{thm2}.

By the fact that the pseudo-absolute value sequences are mutually coprime, one has
\begin{equation*}
    \mathcal{M}(n)+1=\#S(n).
\end{equation*}
Moreover,
\begin{equation*}
  \frac{n}{2} \leq  n_{k_1}^1 n_{k_2}^2\cdots  n_{k_m}^mf(n;k_1,k_2,\cdots,k_m)\leq n.
\end{equation*}
By \eqref{equ1} and assumption \eqref{equ13new}, we have
\begin{align}
\sum_{j=1}^n \frac{\varphi(j)}{ j|{j}|_{\mathcal{D}_1}|{j}|_{\mathcal{D}_2}\cdots |{j}|_{\mathcal{D}_m}} &\geq c\sum_{(k_1,k_2,\cdots,k_m)\in S(n)}f(n;k_1,k_2,\cdots,k_m) \varphi(n_{k_1}^1n_{k_2}^2\cdots n_{k_m}^m )\nonumber\\
&\ge c n \sum_{(k_1,k_2,\cdots,k_m)\in S(n)}\frac{ \varphi(n_{k_1}^1n_{k_2}^2\cdots n_{k_m}^m )}{n_{k_1}^1n_{k_2}^2\cdots n_{k_m}^m}\nonumber\\
&\ge cn\mathcal{M}(n).\label{equ1new1wen3}
\end{align}
By  \eqref{equ1new1wen3} and \eqref{equnew}, we have
\begin{equation}\label{equnew1}
  cn\mathcal{M}(n)\leq  \sum_{j=1}^n\frac{1}{|j|_{\mathcal{D}_1}|j|_{\mathcal{D}_2}\cdots |j|_{\mathcal{D}_m}} \leq n\mathcal{M}(n).
\end{equation}
Suppose
\begin{equation*}
    \sum_{n} \psi(n)\mathcal{M}(n)<\infty.
\end{equation*}
In this case, by \eqref{1829equ2}, one has
\begin{equation*}
    \sum_{n=1}^N \frac{\psi(n)}{|{n}|_{\mathcal{D}_1}|{n}|_{\mathcal{D}_2}\cdots |n|_{\mathcal{D}_m}}\;\;\;\;\;\;\;\;\;\;\;\;\;\;\;\;\;\;\;\;\;\;\;\;\;\;\;\;\;\;\;\;\;\;\;\;
    \;\;\;\;\;\;\;\;\;\;\;\;\;\;\;\;\;\;\;\;\;\;\;\;\;\;\;\;\;\;\;\;\;\;\;\;\;\;\;\;\;\;\;\;\;\;\;\;
\end{equation*}
\begin{eqnarray}
    &=&  \sum_{n=1}^N
	\left(\psi(n)-\psi(n+1) \right) \sum_{j=1}^n \frac{1}{|{j}|_{\mathcal{D}_1}|{j}|_{\mathcal{D}_2}\cdots |j|_{\mathcal{D}_m}}
	+ \psi(N+1)\sum_{j=1}^N  \frac{ 1}{|{j}|_{\mathcal{D}_1}|{j}|_{\mathcal{D}_2}\cdots |j|_{\mathcal{D}_m}} \nonumber\\
   &\leq &  \sum_{n=1}^N
	\left(\psi(n)-\psi(n+1) \right) n\mathcal{M}(n) +\psi(N+1)N\mathcal{M}(N)\nonumber\\
  &\leq & C \sum_{n=1}^N \left(\psi(n)-\psi(n+1) \right) \sum_{j=0}^{n}\mathcal{M}(j) +\psi(N+1)N\mathcal{M}(N)\nonumber\\
    &\leq& C \sum_{n=1}^N \psi(n)\mathcal{M}(n)<\infty ,\label{equ6new}
\end{eqnarray}
where the first inequality holds by \eqref{equnew1}.
By  Borel-Cantelli Lemma,  the inequality
	\begin{equation*}
	  |{n}|_{\mathcal{D}_1 }|{n}|_{\mathcal{D}_2}\cdots |{n}|_{\mathcal{D}_m }|{n\alpha}-p| \le \psi(n)
	\end{equation*}
	has infinitely   many coprime solutions $(n,p) \in \mathbb{N}\times\Z$ only for a zero Lebesgue  measure set of $\alpha$.

Now we are in the position to  prove the other side.

Suppose
\begin{equation*}
    \sum_{n} \psi(n)\mathcal{M}(n)=\infty.
\end{equation*}

By \eqref{equ11} and \eqref{equ1new1wen3},
one has
\begin{eqnarray}
   \sum_{n=1}^N \frac{\varphi(n)\psi(n)}{n|{n}|_{\mathcal{D}_1}|{n}|_{\mathcal{D}_2}\cdots |n|_{\mathcal{D}_m}} &\geq &  c \sum_{n=1}^N
	\left(\psi(n)-\psi(n+1) \right) nM(n) +c\psi(N+1)N\mathcal{M}(N) \nonumber \\
   &\geq &c \sum_{n=1}^N\psi(n)\mathcal{M}(n).\label{Gwen4}
\end{eqnarray}
Thus
\begin{equation}\label{equ7new}
     \sum_{n=1}^{\infty} \frac{\psi(n)}{|{n}|_{\mathcal{D}_1}|{n}|_{\mathcal{D}_2}\cdots |n|_{\mathcal{D}_m}} =\infty.
\end{equation}
By \eqref{equ6new} and \eqref{Gwen4}, we have

\begin{equation}\label{equ8new}
    \sum_{n=1}^N \frac{\varphi(n)\psi(n)}{n|{n}|_{\mathcal{D}_1}|{n}|_{\mathcal{D}_2}\cdots |n|_{\mathcal{D}_m}}\geq c \sum_{n=1}^N\frac{\psi(n)}{|n|_{\mathcal{D}_1}|n|_{\mathcal{D}_2}\cdots |n|_{\mathcal{D}_m}}.
\end{equation}
Applying \eqref{equ7new} and \eqref{equ8new} to Theorem \ref{duffin},
we finish the proof.
\section{Proof of Theorem \ref{thmnewhar}}

Before we give the proof,  one lemma is necessary.
\begin{lemma}\label{Lemar23}
Let $\mathcal{D}=\{n_k\}$ be a pseudo-absolute value sequence and  $\mathfrak{M}(n)$ be given by \eqref{GMhar}.
We have the following estimate,
\begin{equation}\label{Gmar231829equ2}
  N\mathfrak{M}(N)\asymp \sum_{n=1}^N \mathfrak{M}(n).
\end{equation}
\end{lemma}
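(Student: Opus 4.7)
The plan is to follow the same Abel-summation strategy as in the proof of Lemma \ref{Le18}, but to exploit the dyadic growth $n_{k+1}\ge 2n_k$ (which holds automatically because $n_k\mid n_{k+1}$ and the sequence is strictly increasing) in place of the auxiliary hypothesis \eqref{1829equ1}. Since $\mathfrak{M}$ is non-decreasing, the upper bound $\sum_{n=1}^N\mathfrak{M}(n)\le N\mathfrak{M}(N)$ is immediate; only the matching lower bound requires real work.

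Writing $K=K(N)=\max\{k:n_k\le N\}$ and $a_k=\varphi(n_k)/n_k$, I would first interchange the order of summation to obtain
\begin{equation*}
\sum_{n=1}^{N}\mathfrak{M}(n)=\sum_{k=0}^{K}a_k\,(N-n_k+1)=(N+1)\mathfrak{M}(N)-\sum_{k=0}^{K}\varphi(n_k),
\end{equation*}
which is the direct analogue of \eqref{Gmar231}. The key step is to control the error term $\sum_{k=0}^K\varphi(n_k)$. Combining $\varphi(n_k)\le n_k$ with the dyadic estimate $n_k\le n_K/2^{K-k}\le N/2^{K-k}$, the geometric series collapses to the universal bound $\sum_{k=0}^K\varphi(n_k)\le 2N$. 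This is precisely the role played by the assumption \eqref{1829equ1} in Lemma \ref{Le18}; in the single-sequence setting we get it for free from the structure of $\mathcal{D}$.

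Plugging this back gives $\sum_{n=1}^N\mathfrak{M}(n)\ge N\mathfrak{M}(N)-2N$. To extract the $\asymp$ conclusion I would split into two cases. If $\mathfrak{M}(N)\ge 4$, the inequality immediately yields $\sum_{n=1}^N\mathfrak{M}(n)\ge \tfrac{1}{2}N\mathfrak{M}(N)$. If instead $\mathfrak{M}(N)<4$, then $N\mathfrak{M}(N)=O(N)$, and since $n_0=1$ forces $\mathfrak{M}(n)\ge \varphi(1)/1=1$ for every $n\ge 1$ we still have $\sum_{n=1}^N\mathfrak{M}(n)\ge N\gtrsim N\mathfrak{M}(N)$. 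Either way, the lower bound holds with an absolute constant.

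There is no serious obstacle here: the argument is a cleaner, one-variable version of Lemma \ref{Le18}, and the only mild subtlety is the small-$\mathfrak{M}(N)$ regime, which is disposed of by the case split above. The conceptual point worth highlighting in the final write-up is that the dyadic growth of a single pseudo-absolute value sequence makes the analogue of \eqref{1829equ1} automatic, so no extra hypothesis is needed in Theorem \ref{thmnewhar}.
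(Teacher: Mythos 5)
Your proof is correct and follows essentially the same route as the paper: the key identity $\sum_{n=1}^N\mathfrak{M}(n)=(N+1)\mathfrak{M}(N)-\sum_{k\le K}\varphi(n_k)$, the dyadic bound $n_{k+1}\ge 2n_k$ to control $\sum_{k\le K}\varphi(n_k)\le 2N$, and the observation that only the lower bound needs work. Your explicit case split on $\mathfrak{M}(N)\ge 4$ versus $\mathfrak{M}(N)<4$ is a slightly more careful version of the paper's quick dismissal of the bounded-$\mathfrak{M}$ case, but the substance is identical.
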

\begin{proof}
 It is easy to see that \eqref{Gmar231829equ2}  holds if  sequence $\mathfrak{M}(n)$ is bounded.
 Thus, we assume  $\mathfrak{M}(n)\to \infty$ as $n\to \infty$.

It suffices to show that
\begin{equation*}
   N\mathfrak{M}(N)\leq O(1) \sum_{n=1}^N \mathfrak{M}(n).
\end{equation*}
As usual, let $ \mathcal{M}(N)$ be the largest $k$ such that $n_k\leq N$.

By the definition of $\mathfrak{M}(n)$, one has
\begin{eqnarray}
  \sum_{n=1}^N \mathfrak{M}(n) &=& \sum_{k=0}^{\mathfrak{M}(N)} \left(\sum_{j=0}^k\frac{\varphi(n_j)}{n_j}\right) (n_{k+1}-n_k)+\left(\sum_{j=0}^{ \mathcal{M}(N)}\frac{\varphi(n_j)}{n_j}\right)(N-n_{\mathcal{M}(N)}+1)\nonumber\\
   &=& (N+1)\left(\sum_{j=0}^{ \mathcal{M}(N)}\frac{\varphi(n_j)}{n_j}\right) -  \sum_{k=0}^{  \mathcal{M}(N)}n_k\frac{\varphi(n_k)}{n_k}\nonumber\\
   &=& (N+1)\mathfrak{M}(N)-  \sum_{k=0}^{  \mathcal{M}(N)} \varphi(n_k).\label{Gmar233}
\end{eqnarray}
By the  fact that $n_{k+1}\geq 2n_k$, one has
\begin{equation*}
  \sum_{k=0}^{\mathcal{M}(N)}n_k\leq N\sum_{k=0}^{\mathcal{M}(N)}\frac{1}{2^k}\leq 2N.
\end{equation*}
This implies
\begin{equation}\label{Gmar234}
  \sum_{k=0}^{  \mathcal{M}(N)} \varphi(n_k)\leq 2N.
\end{equation}
By \eqref{Gmar233} and \eqref{Gmar234}, we have
\begin{equation*}
  N\mathfrak{M}(N)\leq O(1) \sum_{n=1}^N \mathfrak{M}(n)
\end{equation*}
We finish the proof.
\end{proof}
We will split the proof Theorem \ref{thmnewhar} into two parts.
\begin{theorem}\label{Lithm1}
Let $\mathcal{D}=\{n_k\}$ be a pseudo-absolute value sequence and  $\mathfrak{M}(n)$ be given by \eqref{GMhar}.  Suppose  $\psi:\mathbb{N}\rightarrow\mathbb{R}^+$ is non-increasing and
\begin{equation}\label{GLi3}
  \sum_{n}\  \psi(n) \mathfrak{M}(n) <\infty.
\end{equation}
Then for almost every $\alpha$, the inequality
\begin{equation*}
    |{n}|_{\mathcal{D}}|n\alpha-p|\leq \psi (n)
\end{equation*}
has finitely  many coprime solutions $(n,p)\in\N\times\Z$.
In particular, for any $\epsilon>0$,
\begin{equation*}
    \liminf_{n\to\infty} n \mathfrak{M}(n)(\log n)^{1+\epsilon}	  |{n}|_{\mathcal{D}  } ||{n\alpha}||^{\prime} = 0
\end{equation*}
holds for a zero Lebesgue measure  set $\alpha\in\R$.
\end{theorem}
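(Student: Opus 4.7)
\textbf{Proof plan for Theorem \ref{Lithm1}.} Following the Borel--Cantelli strategy used for Theorem \ref{thm2}, I would first set $\psi_0(n)=\psi(n)/|n|_{\mathcal{D}}$ and
\[
   \mathcal{E}_n \;=\; \bigcup_{\substack{1\le p\le n\\ (p,n)=1}} \Bigl(\tfrac{p-\psi_0(n)}{n},\tfrac{p+\psi_0(n)}{n}\Bigr),
\]
so that $\alpha\in\mathcal{E}_n$ precisely when $|n|_{\mathcal{D}}|n\alpha-p|\le\psi(n)$ has a coprime solution at $n$, and $\lambda(\mathcal{E}_n)\le 2\varphi(n)\psi(n)/(n|n|_{\mathcal{D}})$. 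The goal is therefore to show that \eqref{GLi3} forces $\sum_n \varphi(n)\psi(n)/(n|n|_{\mathcal{D}})<\infty$; then the first half of Borel--Cantelli yields the theorem, and the ``in particular'' statement follows by specializing $\psi(n)=1/(n\mathfrak{M}(n)(\log n)^{1+\epsilon})$, for which $\sum_n \psi(n)\mathfrak{M}(n)=\sum_n 1/(n(\log n)^{1+\epsilon})<\infty$ when $\epsilon>0$.

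The key technical step is the upper bound
\[
   \sum_{j=1}^{n}\frac{\varphi(j)}{j|j|_{\mathcal{D}}} \;\le\; n\,\mathfrak{M}(n).
\]
To prove this I would split according to the level $k$ at which $|j|_{\mathcal{D}}=n_k^{-1}$, writing $j=n_k\ell$:
\[
   \sum_{j=1}^{n}\frac{\varphi(j)}{j|j|_{\mathcal{D}}}
   \;=\; \sum_{k:\,n_k\le n} n_k \!\!\sum_{\substack{\ell\le n/n_k\\ d_{k+1}\nmid\ell}}\!\! \frac{\varphi(n_k\ell)}{n_k\ell}.
\]
Since $n_k\mid n_k\ell$, the elementary inequality $\varphi(m)/m=\prod_{p\mid m}(1-1/p)\ge \varphi(n_k\ell)/(n_k\ell)$ with $m=n_k$ gives $\varphi(n_k\ell)/(n_k\ell)\le \varphi(n_k)/n_k$, and summing over $\ell\le n/n_k$ (dropping the coprimality constraint as an upper bound) produces at most $n\varphi(n_k)/n_k^2$ inside. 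Multiplying by $n_k$ and summing over $k$ yields exactly $n\mathfrak{M}(n)$.

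Having established this, the rest is Abel summation on $\sum_{n=1}^{N}\varphi(n)\psi(n)/(n|n|_{\mathcal{D}})$, exactly as in \eqref{equ11}, combined with the monotonicity of $\psi$:
\[
   \sum_{n=1}^{N}\frac{\varphi(n)\psi(n)}{n|n|_{\mathcal{D}}}
   \;\le\; \sum_{n=1}^{N}(\psi(n)-\psi(n+1))\,n\mathfrak{M}(n)+\psi(N+1)N\mathfrak{M}(N).
\]
Applying Lemma \ref{Lemar23}, which gives $n\mathfrak{M}(n)\le C\sum_{j=1}^{n}\mathfrak{M}(j)$, and then reversing Abel summation collapses the right side to $C\sum_{n=1}^{N}\psi(n)\mathfrak{M}(n)$, which is finite by \eqref{GLi3}. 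Hence $\sum_n\lambda(\mathcal{E}_n)<\infty$ and Borel--Cantelli finishes the proof. The only point that requires genuine thought is the bound on $\sum_{j\le n}\varphi(j)/(j|j|_{\mathcal{D}})$; the rest is a straightforward transcription of the scheme already used in Section~2, with Lemma \ref{Lemar23} playing the role that Lemma \ref{Le18} played in the mutually-coprime case.
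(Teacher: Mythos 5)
Your proposal is correct and follows essentially the same route as the paper: the same Borel--Cantelli reduction, the same Abel summation, the same inner-sum bound $\sum_{j\le n}\varphi(j)/(j|j|_{\mathcal{D}})\le n\mathfrak{M}(n)$ (the paper phrases the key inequality as $\varphi(n_k m)\le m\varphi(n_k)$, which is identical to your $\varphi(n_k\ell)/(n_k\ell)\le\varphi(n_k)/n_k$), and the same appeal to Lemma \ref{Lemar23} to reverse the Abel summation. No substantive difference.
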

\begin{proof}
The proof of Theorem \ref{Lithm1} is based on Borel-Cantelli Lemma.
Without  loss of generality, assume $\alpha\in[0,1)$.
Define
\begin{equation*}
    {\mathcal E}_n={\mathcal E}_n(\psi_0)=\bigcup_{p=1 \atop
(p,n)=1}^n\left(\frac{p-\psi_0(n)}{n},\frac{p+\psi_0(n)}{n}\right),
\end{equation*}
where
\begin{equation*}
    \psi_0(n)= \frac{  \psi(n)}{ |{n}|_{\mathcal{D}}}.
\end{equation*}
By the proof of Theorem \ref{thm2},
in order to prove Theorem \ref{Lithm1}, we only need to show
\begin{equation*}
    \sum_{n} \lambda( {\mathcal E}_n)<\infty.
\end{equation*}
Like \eqref{equ11}, one has
\begin{equation}\label{eqnli4}
	\sum_{n=1}^N \frac{\varphi(n)\psi(n)}{n|{n}|_{\mathcal{D}}} \,  = \,  \sum_{n=1}^N
	\left(\psi(n)-\psi(n+1) \right) \sum_{m=1}^n \frac{\varphi(m)}{m|{m}|_{\mathcal{D}}}
	+ \psi(N+1)\sum_{m=1}^N \frac{\varphi(m)}{m|{m}|_{\mathcal{D}}}.
\end{equation}
We estimate the inner sums here (denote $d_{k+1}=n_{k+1}/n_k$) by
\begin{align*}
\sum_{m=1}^n \frac{\varphi(m)}{m|{m}|_{\mathcal{D}}}&=\sum_{n_k\leq n}\sum_{\substack{m=1\\n_k|m,~n_{k+1}\nmid m}}^n\frac{\varphi(m)}{m|{m}|_{\mathcal{D}}}\\
&=\sum_{n_k\leq n}\sum_{\substack{1\le m\le n/n_k\\d_{k+1}\nmid m}}\frac{\varphi(n_km)}{m}\\
&\le \sum_{n_k\leq n}\varphi (n_k)\sum_{\substack{1\le m\le n/n_k\\d_{k+1}\nmid m}}1\\
&\le n\sum_{n_k\leq n}\frac{\varphi (n_k)}{n_k},\\
&=   n\mathfrak{M}(n),
\end{align*}
where the first inequality holds by the fact that
\begin{equation*}
    \varphi(nm)\leq m\varphi(n).
\end{equation*}
Therefore, by \eqref{eqnli4} and \eqref{Gmar231829equ2}, one has
\begin{eqnarray*}
  \sum_{n=1}^N\lambda({\mathcal E}_n) &\leq&   \sum_{n=1}^N\frac{2\psi_0(n)}{n}\varphi(n)\\
   &=& 2\sum_{n=1}^N \frac{\varphi(n)\psi(n)}{n|{n}|_{\mathcal{D}}}\\
   &\leq&  C \sum_{n=1}^N
	\left(\psi(n)-\psi(n+1) \right)  n\mathfrak{M}(n)+C\psi(N+1)N\mathfrak{M}(N)\\
&\leq& C \sum_{n=1}^N
	\left(\psi(n)-\psi(n+1) \right)  \sum_{j=1}^n\mathfrak{M}(j)+C\psi(N+1)N\mathfrak{M}(N)
\\
 &\leq& \sum_{n=1}^{N+1} C\psi(n)\mathfrak{M}(n).
\end{eqnarray*}
Combining with assumption \eqref{GLi3},   $\sum_{n} \lambda( {\mathcal E}_n)<\infty$ follows.
\end{proof}

The remaining  part    of Theorem \ref{thmnewhar} needs more energy to prove.
In the previous two sections, we used Duffin-Schaeffer theorem to complete the proof. Now,  we will apply   the following lemma to finish our proof.
\begin{lemma}\cite[Theorem 1.17]{Li2}\label{Har}
Let $\psi:\mathbb{N}\rightarrow\mathbb{R}$ be a
non-negative function.   Suppose
\begin{equation}\label{GLi1}
\sum_{n\in\N: G_n\geq 3}\frac{\log G_n}{n\cdot \log\log G_n}=\infty,
\end{equation}
 where
\begin{equation}\label{Gli2}
G_n=\sum_{k=2^{2^n}+1}^{2^{2^{n+1}}}\frac{\psi(k)\varphi(k)}{k}.
\end{equation}
Then for almost every $\alpha$, the  inequality
\begin{equation*}
    |n\alpha -p|\leq \psi (n)
\end{equation*}
has infinitely  many coprime solutions $(n,p)\in \N\times\Z$.
\end{lemma}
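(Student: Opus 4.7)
The plan is to follow the \emph{extra divergence} framework developed in the Duffin--Schaeffer literature (Pollington--Vaughan, Beresnevich--Harman--Haynes--Velani), adapted to the double-exponentially indexed blocks $I_n:=(2^{2^n},2^{2^{n+1}}]$ that appear in \eqref{Gli2}. The idea is to convert the hypothesis \eqref{GLi1} into a divergent sum of measures of suitable sets $E_n$ and then conclude by the variance / Chung--Erd\H{o}s form of the Borel--Cantelli lemma, upgrading positive measure of $\limsup E_n$ to full measure via Gallagher's zero--one law.

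First I would work on $[0,1)$ and, for each $n$, introduce
\[
E_n=\bigcup_{k\in I_n}\;\bigcup_{(p,k)=1}\Bigl(\tfrac{p-\psi(k)}{k},\tfrac{p+\psi(k)}{k}\Bigr)\cap[0,1),
\]
so that $|k\alpha-p|\leq\psi(k)$ has infinitely many coprime solutions iff $\alpha\in\limsup E_n$. The Chung--Erd\H{o}s lemma then reduces the problem to the two ingredients: (I) the lower bound $\lambda(E_n)\gtrsim \log G_n/(n\log\log G_n)$ whenever $G_n\geq 3$, and (II) the quasi-independence bound $\lambda(E_m\cap E_n)\leq(1+o(1))\lambda(E_m)\lambda(E_n)$ for $m\neq n$.

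The heart of the argument is (I). I would apply the Paley--Zygmund inequality to the counting function $N_n(\alpha)=\#\{k\in I_n:\alpha\in(\tfrac{p-\psi(k)}{k},\tfrac{p+\psi(k)}{k})\text{ for some }(p,k)=1\}$, whose first moment is $2G_n$. The second moment splits into a diagonal piece (bounded by $2G_n$ since $2\psi(k)\leq 1$) and an off-diagonal piece controlled through a Pollington--Vaughan / Ramanujan-sum estimate on a single dyadic range. On the block $I_n$ one has $\log k\asymp 2^n$ and $\log\log k\asymp n$ uniformly, so the standard estimate specializes to $\mathbb{E}[N_n^2]\lesssim (n\log\log G_n/\log G_n)\,(2G_n)^2$, and Paley--Zygmund then delivers the required lower bound on $\lambda(N_n\geq 1)=\lambda(E_n)$.

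For (II), the doubly-exponential gap between $I_m$ and $I_n$ makes the contribution of common divisors between denominators in different blocks negligible; a direct area-plus-gcd estimate gives the quasi-independence with no real extra work. The main obstacle, and the technical core of \cite{Li2}, is the sharp off-diagonal estimate in (I): one must show that the gcd-sum interaction on $I_n$ inflates the square of the first moment by exactly the factor $n\log\log G_n/\log G_n$, for which the precise choice of double-exponential block in \eqref{Gli2} is calibrated. Once (I) and (II) are established, hypothesis \eqref{GLi1} yields $\sum_n\lambda(E_n)=\infty$, Chung--Erd\H{o}s gives $\lambda(\limsup E_n)>0$, and Gallagher's zero--one law upgrades this to full measure, completing the proof.
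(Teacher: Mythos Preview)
The paper does not prove this lemma at all: it is quoted verbatim as \cite[Theorem~1.17]{Li2} and used as a black-box input to the proof of Theorem~\ref{Lithm2}. There is therefore no ``paper's own proof'' to compare your proposal against; the authors simply invoke the result and move on.

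That said, your sketch is broadly in the right spirit for how results of this type are proved in the Duffin--Schaeffer literature (blockwise quasi-independence, a Pollington--Vaughan style overlap estimate, Paley--Zygmund or Chung--Erd\H{o}s, and Gallagher's zero--one law). But since your write-up is a plan rather than a proof, a few points would need to be pinned down before it could stand on its own. The crucial one is your claimed second-moment bound $\mathbb{E}[N_n^2]\lesssim (n\log\log G_n/\log G_n)\,(2G_n)^2$: this is exactly the content of Li's theorem, and you have not indicated how the factor $\log\log G_n/\log G_n$ actually emerges from the gcd/overlap sum on a single block $I_n$. The standard Pollington--Vaughan estimate produces a logarithmic loss in the second moment, not the refined $\log\log G_n/\log G_n$ shape, so the specific mechanism in \cite{Li2} (a careful handling of the divisor structure across the block, together with the particular double-exponential scaling) is doing real work that your outline does not reproduce. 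Your quasi-independence claim (II) for $m\neq n$ is also stated more sharply than you need or than is immediately available; a bound of the form $\lambda(E_m\cap E_n)\leq C\,\lambda(E_m)\lambda(E_n)$ with an absolute constant would suffice for Chung--Erd\H{o}s, and that is what one should aim for.

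In short: nothing to compare here, and if you want a self-contained argument you should either cite \cite{Li2} as the paper does, or fill in the off-diagonal estimate in (I) with an actual computation.
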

The next lemma is easy to prove by M\"obius function or follows from  Lemma \ref{lem1} ($k=1$) directly.
\begin{lemma}\label{keysection2li}
For any $d\in \N$, we have
\begin{equation*}
\sum_{\substack{n=N_1\\d\nmid n}}^{N_2}\frac{\varphi (n)}{n}\ge \max\{0, \frac{4}{\pi^2}(N_2-N_1)-O(\log N_2)\}\quad\text{for all }~ 0<N_1<N_2.
\end{equation*}
\end{lemma}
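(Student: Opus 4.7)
The plan is to reduce the claim directly to Lemma \ref{lem1} with $k=1$. If $d=1$ then no integer $n$ satisfies $d\nmid n$, so the sum is empty and the bound holds vacuously. For $d\ge 2$, I would fix any prime divisor $p$ of $d$. Because $p\mid d$, every integer $n$ with $p\nmid n$ automatically satisfies $d\nmid n$, so the set inclusion $\{n:p\nmid n\}\subseteq\{n:d\nmid n\}$ together with the non-negativity of $\varphi(n)/n$ yields
\begin{equation*}
\sum_{\substack{n=N_1\\ d\nmid n}}^{N_2}\frac{\varphi(n)}{n}\ \ge\ \sum_{\substack{n=N_1\\ p\nmid n}}^{N_2}\frac{\varphi(n)}{n}.
\end{equation*}

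Next, I would apply Lemma \ref{lem1} with $k=1$ (for the single prime $p$) at the upper limits $N_2$ and $N_1$ separately and subtract; the two $O(\log\cdot)$ error terms combine into $O(\log N_2)$, giving
\begin{equation*}
\sum_{\substack{n=N_1\\ p\nmid n}}^{N_2}\frac{\varphi(n)}{n}\ =\ \frac{6}{\pi^2}\cdot\frac{p}{p+1}(N_2-N_1)+O(\log N_2).
\end{equation*}
Since the map $p\mapsto p/(p+1)$ is increasing and $p\ge 2$, the leading coefficient is at least $\frac{6}{\pi^2}\cdot\frac{2}{3}=\frac{4}{\pi^2}$. Chaining the two displays above produces
\begin{equation*}
\sum_{\substack{n=N_1\\ d\nmid n}}^{N_2}\frac{\varphi(n)}{n}\ \ge\ \frac{4}{\pi^2}(N_2-N_1)-O(\log N_2),
\end{equation*}
and because the left-hand side is manifestly non-negative one can replace the right-hand side by its maximum with $0$, thereby recovering the stated inequality.

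I do not anticipate any genuine obstacle; the entire proof is a one-step reduction to Lemma \ref{lem1}. The only subtle point is confirming that the constant $\frac{4}{\pi^2}$ is uniform in the divisor $d$, but this is automatic: the quantity $p/(p+1)$ attains its minimum value $2/3$ at the smallest prime $p=2$, so any choice of prime divisor of $d$ produces a leading coefficient at least $\frac{4}{\pi^2}$. The Möbius-based alternative mentioned by the authors proceeds through $\varphi(n)/n=\sum_{k\mid n}\mu(k)/k$, but it requires more careful bookkeeping to pin down the same sharp constant, so the inclusion argument above is the cleaner route.
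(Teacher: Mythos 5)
Your argument is correct and is precisely the proof the paper intends: the authors remark that the lemma ``follows from Lemma~\ref{lem1} ($k=1$) directly,'' and your one-step reduction --- pick any prime divisor $p$ of $d$, note $\{n:p\nmid n\}\subseteq\{n:d\nmid n\}$ so the sum only shrinks, apply Lemma~\ref{lem1} at $N_2$ and at $N_1$ and subtract, then use $p/(p+1)\ge 2/3$ with the minimum attained at $p=2$ --- is exactly that reduction, and it also explains the authors' remark that $4/\pi^2$ is sharp at $d=2$. One small slip: the case $d=1$ does not ``hold vacuously.'' There the left-hand side is an empty sum, hence equal to $0$, while $\max\{0,\tfrac{4}{\pi^2}(N_2-N_1)-O(\log N_2)\}$ is strictly positive once $N_2-N_1$ is large compared with $\log N_2$, so the displayed inequality is actually \emph{false} for $d=1$. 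This is an overstatement already present in the lemma as written (``for any $d\in\N$'') and is harmless in context, since in the paper the lemma is only ever invoked with $d=d_{k+1}=n_{k+1}/n_k\ge 2$; but your proof should say the $d=1$ case is excluded (or irrelevant) rather than claim it holds.
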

\begin{remark}
The sharp bound $\frac{4}{\pi^2}$ can be achieved when $d=2$.
\end{remark}
\begin{theorem}\label{zeroone}
Let $\psi:\N\to \R$ be   non-negative  function and $\lim_{n\to\infty}\psi(n)=0$. Define
 \begin{equation*}
{\mathcal E}_n(\psi)=\bigcup_{p=1 \atop
(p,n)=1}^n\left(\frac{p-\psi(n)}{n},\frac{p+\psi(n)}{n}\right).
\end{equation*}
Then the following claims are true.

\begin{description}
  \item[Zero-one law] $\lambda(\limsup{\mathcal E}_n(\psi))\in\{0,1\}$ \cite{Gallzeroone}.
  \item[Subhomogeneity] For any $t\geq 1$, $\lambda(\limsup{\mathcal E}_n(t\psi))\leq t\lambda(\limsup{\mathcal E}_n(\psi))$ \cite{Li2}.
\end{description}
\end{theorem}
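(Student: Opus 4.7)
The plan is to handle the two claims separately, following the classical Cassels--Gallagher zero-one argument for the first and the covering strategy of \cite{Li2} for the second.

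For the \textbf{zero-one law}, my approach is to show that $A := \limsup_n \mathcal{E}_n(\psi) \pmod 1$ is invariant under every rational rotation $\alpha \mapsto \alpha + a/q$ up to a null set, and then apply ergodicity of the rational rotation subgroup on $\mathbb{R}/\mathbb{Z}$. Ergodicity is an immediate Fourier calculation: invariance of $\mathbf{1}_A$ under every rational rotation forces $(e^{2\pi i k a/q}-1)\widehat{\mathbf{1}_A}(k)=0$ for all $k\neq 0$ and all $a/q\in\mathbb{Q}/\mathbb{Z}$, hence $\widehat{\mathbf{1}_A}(k)=0$ for $k\neq 0$, giving $\lambda(A)\in\{0,1\}$. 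The invariance itself is verified by restricting the hit subsequence of a typical $\alpha\in A$ to indices $n$ divisible by $q\cdot\mathrm{rad}(q)$; for such $n$, a short prime-by-prime check (using $(a,q)=1$ together with the fact that $n$ contains every prime of $q$ to strictly higher multiplicity) shows $(p+an/q,\,n)=(p,n)=1$, so $|n\alpha-p|\leq\psi(n)$ yields $|n(\alpha+a/q)-(p+an/q)|\leq\psi(n)$ with an admissible numerator. A Borel--Cantelli-type density computation, of the kind already used in Theorems \ref{thm2}--\ref{thmnew}, together with $\psi(n)\to 0$, confirms that for almost every $\alpha\in A$ this restricted subsequence still supplies infinitely many hits.

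For \textbf{subhomogeneity}, given the zero-one law just proved, the real content is that $\lambda(A(\psi))=0$ should force $\lambda(A(t\psi))=0$ for every $t\geq 1$, since whenever $\lambda(A(\psi))=1$ the inequality is automatic ($t\geq 1\geq\lambda(A(t\psi))$). The natural route is a covering argument: each inflated peak $(p/n-t\psi(n)/n,\,p/n+t\psi(n)/n)$ of $\mathcal{E}_n(t\psi)$ is covered by $\lceil t\rceil$ arcs of length $2\psi(n)/n$ centered at $p/n$ plus shifts of order $\psi(n)/n$. Since $\psi(n)/n\to 0$, comparing these shifted arcs with genuine components of $\mathcal{E}_n(\psi)$ and absorbing the mismatch into a vanishing error yields $\lambda(A(t\psi))\leq \lceil t\rceil\lambda(A(\psi))$; the zero-one dichotomy then collapses $\lceil t\rceil$ to $t$ because only the $\lambda(A(\psi))=0$ branch of the dichotomy needs any work, and there both sides are zero. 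The careful execution, with quantitative bookkeeping of the coprimality error, is the content of \cite{Li2}.

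The principal obstacle in both parts is the coprimality constraint $(p,n)=1$, which destroys the clean translation symmetry of the family $\{\mathcal{E}_n(\psi)\}$. In the zero-one law it obstructs a naive shift by $a/q$ when $q$ shares prime powers with $n$ at the maximal exponent, forcing the restriction to the sparser progression $\{n:q\,\mathrm{rad}(q)\mid n\}$ and a verification that this progression still carries positive density of hits. In subhomogeneity the same obstruction manifests as shifted centers $(p+\mathrm{shift})/n$ failing the coprimality condition, and bounding the resulting slack by a vanishing error controlled by $\psi(n)\to 0$ is the delicate technical point.
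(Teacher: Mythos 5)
The paper does not prove Theorem \ref{zeroone}; both parts are quoted from the literature, the zero-one law from Gallagher \cite{Gallzeroone} and the subhomogeneity from Li \cite{Li2}. There is thus no internal proof to compare against, and your sketch must stand on its own.

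Your zero-one law argument has a genuine gap. For a fixed $a/q$ you restrict the hit subsequence of $\alpha\in A$ to indices $n$ with $q\cdot\mathrm{rad}(q)\mid n$; the coprimality check for such $n$ is correct, but the assertion that ``a Borel--Cantelli-type density computation \dots confirms that for almost every $\alpha\in A$ this restricted subsequence still supplies infinitely many hits'' is unjustified and in general false. If, for example, $\psi$ is supported on squarefree $n$ (take $\psi(n)=1/(n\log n)$ when $n$ is squarefree and $\psi(n)=0$ otherwise), then for every $q>1$ each $n$ with $q\cdot\mathrm{rad}(q)\mid n$ satisfies $p^2\mid n$ for every prime $p\mid q$ and so is not squarefree; the restricted subsequence is therefore empty for every $q>1$, yet $\sum_n\varphi(n)\psi(n)/n=\infty$ and $\lambda(A)=1$. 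In such cases your argument produces no rational rotation invariance at all, so the ergodicity step never gets off the ground. This is also not Gallagher's route: his proof exploits the exact $1/n$-periodicity of each individual set $\mathcal{E}_n$ (each $\mathcal{E}_n$ is invariant under $x\mapsto x+1/n \pmod 1$), rather than attempting to realize a fixed rational rotation through a sub-progression of denominators.

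Your subhomogeneity sketch is likewise not a proof. Covering each inflated interval of $\mathcal{E}_n(t\psi)$ by translates of the corresponding interval of $\mathcal{E}_n(\psi)$ gives at best
\begin{equation*}
\limsup_n\mathcal{E}_n(t\psi)\subseteq\bigcup_{|k|\le \lceil t\rceil}\limsup_n\bigl(\mathcal{E}_n(\psi)+s_{n,k}\bigr),\qquad s_{n,k}=O\!\left(\frac{\psi(n)}{n}\right),
\end{equation*}
with shifts $s_{n,k}$ depending on $n$, and you do not argue (nor is it obvious) that $\lambda\bigl(\limsup_n(\mathcal{E}_n(\psi)+s_{n,k})\bigr)$ equals $\lambda(\limsup_n\mathcal{E}_n(\psi))$ merely because $s_{n,k}\to 0$. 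You explicitly defer the execution to \cite{Li2}, so this part is a pointer to the reference rather than a proof.
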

We  need another  lemma.
\begin{lemma}\label{apple}
 Let $\mathcal{D}=\{n_k\}$ be a pseudo-absolute value sequence. Then
 \begin{equation}\label{18equ2}
    \sum _{n_k\leq n} n_k\log\frac{n}{n_k}\leq Cn,
 \end{equation}
and
\begin{equation}\label{18equ3}
  \sum_{2^{2^N}\leq n_k\leq 2^{2^{N+1}}}\frac{1}{\log n_k}=O(1).
\end{equation}
\end{lemma}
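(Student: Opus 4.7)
Both estimates are elementary consequences of the doubling property $n_{k+1}\geq 2n_k$, which is forced by $n_k\mid n_{k+1}$ together with strict monotonicity. I would prove each by passing to the $\log_2$-scale so that the sum reduces to a convergent geometric-type series independent of the ambient parameter.

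For \eqref{18equ2}, I would set $L=\mathcal{M}(n)$ and introduce $s_k:=\log_2(n/n_k)\geq 0$. The doubling then translates to the gap condition $s_k-s_{k+1}\geq 1$, so $s_0>s_1>\cdots>s_L\geq 0$. Substituting $n_k=n\cdot 2^{-s_k}$ rewrites the target sum as
\[\sum_{n_k\leq n} n_k\log\frac{n}{n_k}=(n\log 2)\sum_{k=0}^{L} s_k\,2^{-s_k}.\]
After the reindexing $r_j:=s_{L-j}$, the $r_j$ form an increasing sequence with $r_{j+1}\geq r_j+1$ and $r_0\geq 0$, hence $r_j\geq j$. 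Because the function $x\mapsto x\,2^{-x}$ is decreasing on $[1/\ln 2,\infty)$, for $j\geq 2$ one has $r_j\,2^{-r_j}\leq j\,2^{-j}$, while the $j=0,1$ terms are bounded by the global maximum of $x\,2^{-x}$. Summing gives $\sum_{j\geq 0} r_j\,2^{-r_j}=O(1)$ uniformly in $n$, which is exactly \eqref{18equ2}.

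For \eqref{18equ3}, the same doubling translated to $\log_2 n_{k+1}-\log_2 n_k\geq 1$ forces the number of indices $k$ with $n_k\in[2^{2^N},2^{2^{N+1}}]$ to be at most $2^{N+1}-2^N+1=2^N+1$. Each such term satisfies $1/\log n_k\leq 1/(2^N\log 2)$, so the entire sum is bounded by $(2^N+1)/(2^N\log 2)$, which is $O(1)$ uniformly in $N$.

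\textbf{Main obstacle.} There is essentially no analytic difficulty: both inequalities collapse to combinatorial counting once one takes logarithms. The only point that requires a moment of care is verifying the monotonicity of $x\,2^{-x}$ on the right tail, which lets the inequality $r_j\geq j$ be upgraded to $r_j\,2^{-r_j}\leq j\,2^{-j}$ and makes the first series converge to an absolute constant independent of $n$.
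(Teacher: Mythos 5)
Your proof is correct and uses essentially the same key idea as the paper: both estimates rest on the doubling property $n_{k+1}\geq 2n_k$ (equivalently, at most one $n_k$ in each dyadic interval $[2^j,2^{j+1})$), which reduces each sum to a convergent geometric-type series. The paper presents this via explicit dyadic blocking, while you pass to $\log_2$-scale coordinates $s_k=\log_2(n/n_k)$ and reindex; the underlying mechanism is identical.
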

\begin{proof}
Since $\{n_k\}$ is a pseudo-absolute value sequence,
  there exists at most one $n_k$ such that $2^j\leq n_k<2^{j+1}$.
Thus
\begin{eqnarray*}
   \sum _{n_k\leq n} n_k\log\frac{n}{n_k} &\leq&   \sum_{j=0}^{\log_2 n}  \sum_{2^j\leq n_k<2^{j+1}} n_k\log \frac{n}{n_k}\\
 &\leq&   \sum_{j=0}^{\log_2 n}   2^{j+1}\log \frac{n}{2^j} \\
   &\leq&  Cn.
\end{eqnarray*}
This proves \eqref{18equ2}.

Similarly, we have

\begin{eqnarray*}
   \sum_{n=2^{2^N}+1}^{2^{2^{N+1}}}\frac{1}{\log n_k} &\leq&   \sum_{j=2^N}^{2^{N+1}}  \sum_{2^j\leq n_k<2^{j+1}} \frac{1}{\log n_k}\\
 &\leq&  O(1)  \sum_{j=2^N}^{2^{N+1}}   \frac{1}{j} \\
   &=& O(1).
\end{eqnarray*}
We finish the proof.
\end{proof}
%
%
After the preparations, we can prove the case $\epsilon=0$ of Theorem \ref{thmnewhar}.
\begin{theorem}\label{Lithm2}
Let $\mathcal{D}=\{n_k\}$ be a pseudo-absolute value sequence and  $\mathfrak{M}(n)$ be given by \eqref{GMhar}.
Then
for almost every $\alpha\in\R$
	\begin{equation*}
		\liminf_{n\to\infty}n \mathfrak{M}(n)(\log n)  |{n}|_{\mathcal{D} }||{n\alpha}||^\prime =0.
	\end{equation*}
\end{theorem}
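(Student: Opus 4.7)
The plan is to apply Lemma \ref{Har} with $\psi_0(n) := 1/(|n|_{\mathcal{D}}\,n\,\mathfrak{M}(n)\,\log n)$, so that coprime solutions $(n,p)$ of $|n\alpha - p|\leq \psi_0(n)$ coincide with witnesses to $n\mathfrak{M}(n)(\log n)|n|_{\mathcal{D}}\|n\alpha\|'\leq 1$. By the subhomogeneity clause of Theorem \ref{zeroone} one may rescale $\psi_0$ by any fixed positive constant, and by the zero-one law any positive-measure statement upgrades to full measure. Writing $M:=2^{2^n}$, hypothesis \eqref{GLi1} becomes
\[
\sum_{n:G_n\geq 3}\frac{\log G_n}{n\log\log G_n}=\infty,\qquad G_n=\sum_{M<k\leq M^2}\frac{\varphi(k)}{k^2|k|_{\mathcal{D}}\mathfrak{M}(k)\log k}.
\]

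The main step is to lower bound $G_n$ by an absolute constant for all large $n$. Within the block $(M,M^2]$ one has $\log k\leq 2\log M$ and $\mathfrak{M}(k)\leq \mathfrak{M}(M^2)$, so matters reduce to estimating $T_n:=\sum_{M<k\leq M^2}\varphi(k)/(k^2|k|_{\mathcal{D}})$ from below. Following the decomposition used in the proof of Theorem \ref{thm2}, I would write $k=n_j\ell$ with $n_j=|k|_{\mathcal{D}}^{-1}$ and $d_{j+1}\nmid\ell$; applying $\varphi(n_j\ell)\geq \varphi(n_j)\varphi(\ell)$ and estimating the inner $\ell$-sum by Abel summation together with Lemma \ref{keysection2li} yields a factor $\geq c\log M$ from every $n_j\leq M/C$. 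This produces
\[
T_n\geq c(\log M)\,\mathfrak{M}(M/C),\qquad G_n\geq c\,\frac{\mathfrak{M}(M/C)}{\mathfrak{M}(M^2)}.
\]

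To close, the ratio $\mathfrak{M}(M/C)/\mathfrak{M}(M^2)$ must be bounded below by an absolute constant. Since $n_{k+1}\geq 2n_k$, the number of elements of $\mathcal{D}$ in the window $(M/C,M^2]$ is $O(\log M)$, and the quantitative control on how $n_k$'s can distribute in a doubly-exponential scale is furnished by Lemma \ref{apple}. Splitting into the case $\mathfrak{M}$ bounded (where $\mathfrak{M}(M/C),\mathfrak{M}(M^2)\to L<\infty$, so the ratio tends to $1$) and $\mathfrak{M}\to\infty$ (where the incremental bound $\mathfrak{M}(M^2)-\mathfrak{M}(M/C)=O(\log M)$ combined with the growth of $\mathfrak{M}(M^2)$ supplies the required lower bound) yields a uniform $G_n\gtrsim 1$. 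After rescaling $\psi_0$ by a constant to ensure $G_n\geq 3$, the series $\sum_n\log G_n/(n\log\log G_n)$ dominates $c\sum_n 1/n=\infty$, and Lemma \ref{Har} delivers the required infinitely many coprime solutions for almost every $\alpha$.

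The hardest point is the uniform lower bound $\mathfrak{M}(M/C)/\mathfrak{M}(M^2)\gtrsim 1$: a priori, $\mathcal{D}$ might concentrate a disproportionate amount of its Euler-phi mass inside the doubly-exponential window, and only the sharp dyadic/Lemma \ref{apple}-type control rules this out. A secondary technical issue is handling the residual contribution from large $n_j\in(M/C,M^2]$ in the decomposition of $T_n$; here Lemma \ref{apple}'s identity $\sum_{n_k\in[2^{2^n},2^{2^{n+1}}]}1/\log n_k=O(1)$ is precisely what is needed.
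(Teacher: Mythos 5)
The overall strategy---apply Lemma \ref{Har}, then the subhomogeneity and zero-one law of Theorem \ref{zeroone}, after decomposing the $G_n$-sum according to the value of $|k|_{\mathcal{D}}$---matches the paper. But there is a genuine gap in your lower bound for $G_n$, and it is precisely the point the paper's argument is designed to handle.

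You pull $\mathfrak{M}(k)$ and $\log k$ out of the sum over $k\in(M,M^2]$ using their extreme values on the block ($\mathfrak{M}(k)\leq\mathfrak{M}(M^2)$, $\log k\leq 2\log M$), reducing the problem to bounding $T_n=\sum_{M<k\leq M^2}\varphi(k)/(k^2|k|_{\mathcal{D}})$. This yields $G_n\gtrsim \mathfrak{M}(M/C)/\mathfrak{M}(M^2)$, and you then assert that this ratio is uniformly bounded below. That assertion is false in general. Take $\mathcal D$ to consist of powers of $2$, with all of $\{2^a:2^{N_j}<a\leq 2^{N_j+1}\}$ for each $j$ and only the single endpoint $2^{2^{N+1}}$ for $N\notin\{N_j\}$, where $N_1<N_2<\cdots$ grows rapidly. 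Then $\mathfrak{M}(2^{2^{N_j}})\approx 2^{N_{j-1}}$ while $\mathfrak{M}(2^{2^{N_j+1}})\approx 2^{N_j}$, so along $n=N_j$ the ratio $\mathfrak{M}(M/C)/\mathfrak{M}(M^2)\approx 2^{N_{j-1}-N_j}\to 0$. Your dichotomy ($\mathfrak{M}$ bounded vs.\ unbounded) does not close this: the increment bound $\mathfrak{M}(M^2)-\mathfrak{M}(M/C)=O(\log M)$ is only useful when $\mathfrak{M}(M/C)\gtrsim\log M$, which is not guaranteed. (And in fact $\mathfrak{M}(n)\to\infty$ for every pseudo-absolute value sequence, so the ``bounded'' branch of your dichotomy is vacuous; the issue is entirely about the \emph{rate} of growth, which you do not control.)

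The paper sidesteps this by performing Abel summation with the \emph{inner} sum starting at $2^{2^N}+1$ rather than at $1$: see \eqref{equ11liwen1}. The estimate \eqref{equ1final} then produces
\begin{equation*}
\sum_{j=2^{2^N}+1}^{n}\frac{\varphi(j)}{j|j|_{\mathcal D}}\ \geq\ \frac{4}{\pi^2}\,\mathfrak{M}(n)\bigl(n-2^{2^N}\bigr)\;-\;\sum_{n_k\leq n}O\Bigl(n_k\log\tfrac{n}{n_k}\Bigr),
\end{equation*}
with the factor $\mathfrak{M}(n)$ sitting \emph{inside} the Abel sum. It then cancels exactly against the $\mathfrak{M}(n)$ in the denominator of $\psi(n)-\psi(n+1)\asymp 1/(n^{2}\log n\,\mathfrak{M}(n))$, so the main term reduces to $\sum (n-2^{2^N})/(n^{2}\log n)\asymp 1$ with no comparison of $\mathfrak{M}$ at different scales ever needed. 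The error term is controlled by \eqref{18equ2}--\eqref{18equ3} and \eqref{18equ5}--\eqref{18equ6} and comes out as $O(1)/\mathfrak{M}(2^{2^N})\to 0$. Crucially, the factor $(n-2^{2^N})$ is what you lose by estimating $T_n$ over the whole block at once; it is exactly what makes the argument insensitive to how the mass of $\mathfrak{M}$ is distributed between the scales $M$ and $M^{2}$. To repair your proof you would need to keep $\mathfrak{M}(k)$ and $\log k$ inside the $k$-sum and exploit this cancellation, rather than bounding them by block extremes.
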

\begin{proof}
Without loss of generality, assume $\alpha\in[0,1)$.
 Let $$\psi_0(n) =\frac{1}{  |{n}|_{\mathcal{D} } n\mathfrak{M}(n)(\log n)},$$
 and
 \begin{equation*}
    \psi(n) =\frac{1}{    n\mathfrak{M}(n)(\log n)}.
 \end{equation*}
It suffices to show that there exists some $c>0$ such that
 \begin{equation}\label{Gli2last}
G_N=\sum_{n=2^{2^N}+1}^{2^{2^{N+1}}}\frac{\psi_0(n)\varphi(n)}{n}>c
\end{equation}
for  $N\in\N$.
Indeed, if \eqref{Gli2last} holds, then for any $\varepsilon>0$, there exists some $C>0$ such that
\begin{equation*}
    \sum_{n=2^{2^N}+1}^{2^{2^{N+1}}}\frac{C\varepsilon\psi_0(n)\varphi(n)}{n}\geq 3 \text{ for all }   N.
\end{equation*}
Applying   Lemma \ref{Har} (letting $\psi=C\varepsilon\psi_0$),  one has
\begin{equation}\label{sub}
    \lambda(\limsup{\mathcal E}_n(C\varepsilon\psi_0))=1.
\end{equation}
Applying Theorem \ref{zeroone} (Subhomogeneity) to \eqref{sub},
we obtain
\begin{equation*}
    \lambda({\limsup\mathcal E}_n(\varepsilon\psi_0))\geq \frac{1}{C}.
\end{equation*}
By zero-one law of Theorem \ref{zeroone},  we have
\begin{equation*}
    \lambda(\limsup{\mathcal E}_n(\varepsilon\psi_0))=1.
\end{equation*}
Thus for any $\varepsilon>0$, we have that
 for almost every $\alpha$, the inequality
\begin{equation*}
    |n\alpha -p|\leq \varepsilon\psi _0(n)
\end{equation*}
has infinitely  many coprime solutions $(n,p)\in \N\times\Z$.
This implies that  for almost every $\alpha\in\R$
	\begin{equation*}
		\liminf_{n\to\infty}n \mathfrak{M}(n)(\log n)  |{n}|_{\mathcal{D} }||{n\alpha}||^\prime =0.
	\end{equation*}
Now we focus on the proof of \eqref{Gli2last}.

As usual, we have
\begin{equation*}
    \sum_{n=2^{2^N}+1}^{2^{2^{N+1}}}\frac{\varphi(n)\psi(n)}{n|{n}|_{\mathcal{D}}}
\end{equation*}
\begin{equation}\label{equ11liwen1}
	   =     \sum_{n=2^{2^N}+1}^{2^{2^{N+1}}}
	\left(\psi(n)-\psi(n+1) \right) \sum_{j=2^{2^N}+1}^n \frac{\varphi(j)}{j|{j}|_{\mathcal{D}}}
	+ \psi(2^{2^{N+1}}+1) \sum_{j=2^{2^N}+1}^{2^{2^{N+1}}} \frac{\varphi(j)}{j|{j}|_{\mathcal{D}}}.
\end{equation}
Direct computation yields to
\begin{align}
\sum_{j=2^{2^N}+1}^n \frac{\varphi(j)}{ j|{j}|_{\mathcal{D}}}&=\sum_{k:1\leq n_k\leq n}\sum_{\substack{j=2^{2^N}+1\\n_{k } |j,~n_{k+1}\nmid j}}^n\frac{\varphi(j)}{ j|{j}|_{\mathcal{D}}}\nonumber\\
&=\sum_{n_k\leq n}\sum_{\substack{ \frac{2^{2^N}+1}{n_k}\le j\le \frac{n}{n_k}\\d_{k +1} \nmid j}} \frac{\varphi(n_kj)}{j}\nonumber\\
&\ge\sum_{n_k\leq n}\varphi(n_k)\sum_{\substack{ \frac{2^{2^N}+1}{n_k}\le j\le \frac{n}{n_k}\\d_{k +1} \nmid j}} \frac{\varphi(j)}{j}\nonumber\\
&\ge \frac{4}{\pi^2} \sum_{n_k\leq n} \varphi(n_k )\max\{0,\frac{n-2^{2^N}}{n_k}-O\left(\log(\frac{n}{n_k})\right)\}\nonumber\\
&\ge \frac{4}{\pi^2} \sum_{n_k\leq n} \frac{\varphi(n_k )}{n_k}\left( ( n-2^{2^N})-O(n_k\log\frac{n}{n_k})\right)\nonumber\\
&\ge \frac{4}{\pi^2} \mathfrak{M}(n)( n-2^{2^N})- \sum _{n_k\leq n}O\left(n_k\log\frac{n}{n_k}\right),\label{equ1final}
\end{align}
where  the second  inequality holds by Lemma \ref{keysection2li}.

By the definition of $\psi(n)$, we have  for $n\neq n_k$,
\begin{equation}\label{18equ5}
    \psi(n)-\psi(n+1)=\frac{O(1)}{n^2\mathfrak{M}(n)\log n},
\end{equation}
and
\begin{equation}\label{18equ6}
    \psi(n_k)-\psi(n_k+1)=\frac{O(1)}{n_k\mathfrak{M}^2(n_k)\log n_k}.
\end{equation}
By \eqref{18equ2}, \eqref{18equ5} and \eqref{18equ6},
one has
\begin{equation*}
    \sum_{n=2^{2^N}+1}^{2^{2^{N+1}}}
	\left(\psi(n)-\psi(n+1) \right) \sum _{n_k\leq n} n_k\log\frac{n}{n_k}
	+ \psi(2^{2^{N+1}}+1)  \sum _{n_k\leq 2^{2^{N+1}}} n_k\log\frac{2^{2^{N+1}}}{n_k}
\end{equation*}
\begin{eqnarray}
    &\leq&  \sum_{n=2^{2^N}+1}^{2^{2^{N+1}}}  \frac{O(1)}{n\mathfrak{M}(n)\log n} + \sum_{2^{2^N}\leq n_k\leq 2^{2^{N+1}}}\frac{O(1)}{\mathfrak{M}^2(n_k)\log n_k}+ \frac{O(1)}{\mathfrak{M}(2^{2^{N+1}})}\nonumber\\
 &\leq& \frac{O(1)}{\mathfrak{M}^2(2^{2^{N}})} +\frac{O(1)}{\mathfrak{M}(2^{2^{N+1}})}+\frac{ O(1)}{\mathfrak{M}(2^{2^{N}})} \sum_{n=2^{2^N}+1}^{2^{2^{N+1}}}  \frac{1}{n\log n} \nonumber\\
  &=&   \frac{ O(1)}{\mathfrak{M}(2^{2^{N}})},\label{Wen1}
\end{eqnarray}
where the second inequality holds by \eqref{18equ3} and the third inequality holds because of  ($a=2^{2^{N}}$ and $b=2^{2^{N+1}}$)
\begin{equation}\label{18equ7}
 \sum_{a}^{b}\frac{1}{n\log n}\asymp \int_a^b \frac{dx}{x\log x}=\log\log b-\log\log a \text { for any } b>a>1.
\end{equation}


Putting \eqref{equ1final} and \eqref{Wen1} into \eqref{equ11liwen1},
we obtain
   \begin{eqnarray*}
      \sum_{n=2^{2^N}+1}^{2^{2^{N+1}}}\frac{\varphi(n)\psi(n)}{n|{n}|_{\mathcal{D} }} &\geq&\sum_{n=2^{2^N}+1}^{2^{2^{N+1}}}  c\left(\frac{1}{n\log n\mathfrak{M}(n)}-\frac{1}{(n+1)\log(n+1)\mathfrak{M}(n+1)} \right)\mathfrak{M}(n)(n-2^{2^N})-  \frac{ O(1)}{\mathfrak{M}(2^{2^{N}})}\\
       &\geq&
      \sum_{n=2^{(2^N+4)}}^{2^{2^{N+1}}} \frac{c}{2}\left(\frac{1}{n\log n\mathfrak{M}(n)}-\frac{1}{(n+1)\log(n+1)\mathfrak{M}(n+1)} \right)n\mathfrak{M}(n)-  \frac{ O(1)}{\mathfrak{M}(2^{2^{N}})}\\
     &\geq&  c\sum_{n=2^{(2^N+4)}}^{2^{2^{N+1}}}\frac{1}{n\log n}- \frac{ O(1)}{\mathfrak{M}(2^{2^{N}})}.
   \end{eqnarray*}
Using \eqref{18equ7} again,
\begin{equation*}
    \sum_{n=2^{(2^N+4)}}^{2^{2^{N+1}}}\frac{1}{n\log n}\asymp 1.
\end{equation*}
This yields that for some $c>0$,
\begin{equation*}
    G_N\geq c.
\end{equation*}
We finish the proof.
\end{proof}

 \section*{Acknowledgments}
I would like to thank Svetlana
Jitomirskaya for   introducing to me the Littlewood  conjecture and  comments on earlier versions of the manuscript. I also thank   the  anonymous referee for careful reading of the manuscript that has led to an important improvement.
 The author  was supported by the AMS-Simons Travel Grant 2016-2018 and  NSF DMS-1700314. This research was also
partially supported by NSF DMS-1401204.


\footnotesize


\begin{thebibliography}{10}

\bibitem{Bad}
D.~A. Badziahin.
\newblock On multiplicatively badly approximable numbers.
\newblock {\em Mathematika}, 59(1):31--55, 2013.

\bibitem{beresnevich2013duffin}
V.~Beresnevich, G.~Harman, A.~Haynes, and S.~Velani.
\newblock The {D}uffin-{S}chaeffer conjecture with extra divergence {II}.
\newblock {\em Mathematische Zeitschrift}, 275(1-2):127--133, 2013.

\bibitem{bourgain2009some}
J.~Bourgain, E.~Lindenstrauss, P.~Michel, and A.~Venkatesh.
\newblock Some effective results for$\times$ a$\times$ b.
\newblock {\em Ergodic Theory and Dynamical Systems}, 29(06):1705--1722, 2009.

\bibitem{bugeaud2014around}
Y.~Bugeaud.
\newblock Around the {L}ittlewood conjecture in {D}iophantine approximation.
\newblock {\em Publications math{\'e}matiques de Besan{\c{c}}on}, (1):5--18,
  2014.

\bibitem{bugeaud2011metric}
Y.~Bugeaud, A.~Haynes, and S.~Velani.
\newblock Metric considerations concerning the mixed {L}ittlewood conjecture.
\newblock {\em International Journal of Number Theory}, 7(03):593--609, 2011.

\bibitem{bugeaud2011badly}
Y.~Bugeaud and N.~Moshchevitin.
\newblock {B}adly approximable numbers and {L}ittlewood-type problems.
\newblock In {\em Mathematical Proceedings of the Cambridge Philosophical
  Society}, volume 150, pages 215--226. Cambridge Univ Press, 2011.

\bibitem{de2004problemes}
B.~de~Mathan and O.~Teuli{\'e}.
\newblock Problemes diophantiens simultan{\'e}s.
\newblock {\em Monatshefte f{\"u}r Mathematik}, 143(3):229--245, 2004.

\bibitem{duffin1941khintchine}
R.~Duffin and A.~Schaeffer.
\newblock Khintchine’s problem in metric {D}iophantine approximation.
\newblock {\em Duke Mathematical Journal}, 8(2):243--255, 1941.

\bibitem{einsiedler2006invariant}
M.~Einsiedler, A.~Katok, and E.~Lindenstrauss.
\newblock Invariant measures and the set of exceptions to {L}ittlewood's
  conjecture.
\newblock {\em Annals of mathematics}, pages 513--560, 2006.

\bibitem{einsiedler2007measure}
M.~Einsiedler and D.~Kleinbock.
\newblock Measure rigidity and $p$-adic {L}ittlewood-type problems.
\newblock {\em Compositio Mathematica}, 143(03):689--702, 2007.

\bibitem{Furstenberg}
H.~Furstenberg.
\newblock Disjointness in ergodic theory, minimal sets, and a problem in
  {D}iophantine approximation.
\newblock {\em Math. Systems Theory}, 1:1--49, 1967.

\bibitem{Gallzeroone}
P.~Gallagher.
\newblock Approximation by reduced fractions.
\newblock {\em J. Math. Soc. Japan}, 13:342--345, 1961.

\bibitem{Gal}
P.~Gallagher.
\newblock Metric simultaneous diophantine approximation.
\newblock {\em J. London Math. Soc.}, 37:387--390, 1962.

\bibitem{harrap2013mixed}
S.~Harrap and A.~Haynes.
\newblock The mixed {L}ittlewood conjecture for pseudo-absolute values.
\newblock {\em Mathematische Annalen}, 357(3):941--960, 2013.

\bibitem{haynes2012duffin}
A.~K. Haynes, A.~D. Pollington, and S.~L. Velani.
\newblock The {D}uffin-{S}chaeffer conjecture with extra divergence.
\newblock {\em Mathematische Annalen}, 353(2):259--273, 2012.

\bibitem{Li1}
L.~Li.
\newblock A note on the {D}uffin-{S}chaeffer conjecture.
\newblock {\em Unif. Distrib. Theory}, 8(2):151--156, 2013.

\bibitem{Li2}
L.~Li.
\newblock The {D}uffin--{S}chaeffer-type conjectures in various local fields.
\newblock {\em Mathematika}, 62(3):753--800, 2016.

\bibitem{Sch}
Y.~Peres and W.~Schlag.
\newblock Two {E}rd{\H{o}}s problems on lacunary sequences: chromatic number
  and {D}iophantine approximation.
\newblock {\em Bull. Lond. Math. Soc.}, 42(2):295--300, 2010.

\bibitem{PV}
A.~D. Pollington and R.~C. Vaughan.
\newblock The {$k$}-dimensional {D}uffin and {S}chaeffer conjecture.
\newblock {\em Mathematika}, 37(2):190--200, 1990.

\bibitem{Stra}
O.~Strauch.
\newblock Duffin-{S}chaeffer conjecture and some new types of real sequences.
\newblock {\em Acta Math. Univ. Comenian.}, 40(41):233--265, 1982.

\end{thebibliography}
\end{document}